\newtheorem{lem}{Lemma}
\newtheorem{thm}{Theorem}
\numberwithin{equation}{section}
\title{Physics-Informed Neural Networks for Optimal Vaccination Plan in SIR Epidemic Models}
\author{
Minseok Kim\thanks{Department of Applied Artificial Intelligence, 232 Gonneung-ro, Nowon-gu, Seoul, 01811, Republic of Korea.}
\and
Yeongjong Kim \thanks{Department of Mathematics, POSTECH, 77 Cheongam-ro, Nam-gu, Pohang, Gyeongbuk, 06974, Republic of Korea} 
\and
Yeoneung Kim\footnotemark[2]}
\date{}
\providecommand{\keywords}[1]{\textbf{Key words.} #1}
\begin{document}
\maketitle

\pagestyle{myheadings}
\thispagestyle{plain}

\begin{abstract}
This work focuses on understanding the minimum eradication time for the controlled Susceptible-Infectious-Recovered (SIR) model in the time-homogeneous setting, where the infection and recovery rates are constant. The eradication time is defined as the earliest time the infectious population drops below a given threshold and remains below it. For time-homogeneous models, the eradication time is well-defined due to the predictable dynamics of the infectious population, and optimal control strategies can be systematically studied. We utilize Physics-Informed Neural Networks (PINNs) to solve the partial differential equation (PDE) governing the eradication time and derive the corresponding optimal vaccination control. The PINN framework enables a mesh-free solution to the PDE by embedding the dynamics directly into the loss function of a deep neural network. 
We use a variable scaling method to ensure stable training of PINN and mathematically analyze that this method is effective in our setting. This approach provides an efficient computational alternative to traditional numerical methods, allowing for an approximation of the eradication time and the optimal control strategy. Through numerical experiments, we validate the effectiveness of the proposed method in computing the minimum eradication time and achieving optimal control. This work offers a novel application of PINNs to epidemic modeling, bridging mathematical theory and computational practice for time-homogeneous SIR models.
\end{abstract}

\keywords{\textbf{Physics--informed neural networks}, \textbf{optimal control}, \textbf{Hamilton--Jacobi--Bellman equation}, \textbf{controlled epidemic model}, \textbf{minimum eradication time}}

\section{Introduction}
The study of vaccination strategies and eradication times in Susceptible-Infectious-Recovered (SIR) models has a long history, beginning with the seminal work of Kermack and McKendrick \cite{kermack1927SIR}, and its variants have received a great deal of attention during and after the outbreak of COVID-19. The controlled SIR model is a cornerstone in mathematical epidemiology and is frequently employed to study the dynamics of disease transmission and control strategies. Various optimization problems based on the SIR model, where the vaccination strategy is treated as a control and the eradication time as a cost function to be minimized, have been extensively studied within the framework of optimal control theory~\cite{barro2018optimal,pierre-alexandre2007optimal,bolzoni2017time,ev2016optimalcontrol}. 

The controlled SIR model is given by
\[
\begin{cases}
    \dot S = - \beta(t) S I - r(t) S,\\
    \dot I = \beta(t) S I - \gamma(t) I,
\end{cases}
\]
for \(t > 0\), with initial conditions \(S(0) = x\) and \(I(0) = y \geq \mu\) for some \(\mu > 0\). Here, \(\beta(t)\) and \(\gamma(t)\) denote the infection and recovery rates, respectively, while \(r(t)\) represents a vaccination control that takes values in \([0, 1]\). 

Recently, the authors of~\cite{bolzoni2017time} introduced the notion of minimum eradication time, defined as the first time \(I\) falls below a given threshold \(\mu > 0\). For mathematical treatments, the eradication time in controlled SIR models with constant infection and recovery rates was first studied as a viscosity solution to a static first-order Hamilton--Jacobi--Bellman (HJB) equation in \cite{hynd2021eradication}. Additionally, two critical times in SIR dynamics are studied in~\cite{hynd2022critical}: the point at which the infected population begins to decrease and the first time that this population falls below a given threshold. Both studies~\cite{hynd2021eradication,hynd2022critical} focused on SIR models with constant \(\beta\) and \(\gamma\). To identify optimal controls, the Pontryagin Maximum Principle~\cite{pontryagin2018mathematical} was applied, confirming that a bang-bang control (i.e., taking values of 0 or 1) is optimal. The authors of~\cite{jang2023minimum} extended the notion of eradication time to cases involving time-inhomogeneous dynamics. However, numerical treatments for solving Hamilton-Jacobi equations and determining optimal vaccination controls remain relatively unexplored.

Based on the finding that the minimum eradication time satisfies the HJB equation that has a viscosity solution~\cite{hynd2021eradication,tran2021hamilton}, we propose a novel approach leveraging Physics-Informed Neural Networks (PINNs) to approximate the minimum eradication time and synthesize an optimal control for the time-independent controlled SIR model in a mesh-free manner. In the context of optimal control theory, PINNs offer computationally efficient and accurate solutions while traditional numerical approaches to such problems often demonstrate computational challenges. Our study leverages PINNs to bridge the gap between theoretical insights into eradication time and practical control strategies.

Additionally, we provide theoretical evidence for our approach. Despite the numerous successful applications of PINNs, a comprehensive understanding of the conditions required for stable and effective learning remains incomplete. To address this, significant efforts have been made to identify theoretical conditions for successful learning and to develop empirical methods that enhance the stability and reliability of PINNs during training. One simple yet promising method is the variable scaling technique proposed in~\cite{VS-PINN}, inspired by the observation that neural networks often struggle to approximate stiff functions effectively. The authors of~\cite{VS-PINN} provide a mathematical analysis of the variable scaling method in the context of a one-dimensional Poisson equation, leveraging the neural tangent kernel (NTK) theory~\cite{jacot2018neural}. Building on this work, we extend their analysis to the HJB equation, a more complex problem defined over a higher-dimensional domain.

\subsection{Related works}
PINNs~\cite{raissi2019physics} have gained significant attention as a powerful and flexible framework for solving differential equations, and have been widely adopted in various fields, including epidemic modeling~\cite{kharazmi2021identifiability,yazdani2020systems}, fluid mechanics~\cite{cai2021physics,raissi2020hidden,raissi2019physics,jin2021nsfnets}, finance~\cite{wang2023deep,bai2022application}, and biomedical engineering~\cite{kissas2020machine,sahli2020physics}, where understanding the underlying physical models is crucial.

To find a solution to differential equations in the framework of PINNs, we train a neural network to minimize a loss function comprising initial and boundary conditions as well as residual terms derived from the governing equations. However, the training results are highly sensitive to the choice of boundary condition settings, requiring the introduction of a penalty coefficient to balance the boundary loss term. While heuristic adjustments to the penalty coefficient can accelerate convergence, improper values may lead to inaccurate solutions. To address these challenges, adaptive methods have been developed. For instance, the authors of~\cite{wang2021understanding} proposed a learning rate annealing algorithm that adaptively assigns weights to each term in the loss function. PINNs with adaptive weighted loss functions have been introduced for the efficient training of Hamilton–Jacobi (HJ) equations~\cite{liu2022physics}. To further enhance the stability of PINNs, the authors of~\cite{wang2022and} proposed an adaptive training strategy that ensures stable convergence through the lens of NTK theory~\cite{jacot2018neural}. Recently, the failure of PINNs in stiff ODE systems was observed~\cite{ji2021stiff}, and stiff-PINN was proposed for improvement. Subsequently, various methods have been introduced, such as self-adaptive PINNs~\cite{mcclenny2023self} and variable-scaling PINNs~\cite{VS-PINN}. Among these methods, we employ the variable scaling technique~\cite{VS-PINN}, as it is simple and effective.

While PINNs have been successfully applied to a wide range of differential equation problems, their application to optimal control, particularly in solving Hamilton–Jacobi–Bellman (HJB) equations, remains relatively underexplored. The key challenge lies in ensuring stability and accuracy when approximating value functions and control policies. This has motivated recent studies investigating the interplay between deep learning and optimal control, aiming to develop computationally efficient methods that leverage the advantages of PINNs for solving PDE and optimal control problems.

There is a rich body of literature exploring the interplay between PINNs and optimal control. By leveraging the ability of PINNs to solve PDEs and the scalability of deep neural networks, researchers have developed computationally efficient methods for solving optimal control problems. For instance, a training procedure for obtaining optimal control in PDE-constrained problems was presented in~\cite{mowlavi2023optimal}. Similarly, the authors of~\cite{meng2024physics} utilized a Lyapunov-type PDE for efficient policy iteration in control-affine problems. Slightly later, a deep operator learning framework was introduced to solve high-dimensional optimal control problems~\cite{lee2024hamilton}, building on the policy-iteration scheme developed in~\cite{tang2023policy}. Most recently,~\cite{yin2023optimal} demonstrated the application of deep learning in controlled epidemic models.

Building on these advancements, our work focuses on leveraging PINNs for solving optimal control problems, specifically in the context of SIR models with vaccination strategies. The proposed approach not only provides an effective approximation of the minimum eradication time but also facilitates the synthesis of optimal control policies in a computationally efficient manner.

\subsection{Contributions}
This paper makes the following contributions:
\begin{itemize} 
\item We propose a PINN-based computational framework to approximate the minimum eradication time for the controlled SIR model without domain discretization. 
\item We derive optimal vaccination control strategies from the learned eradication time and demonstrate the effectiveness of our method through numerical experiments. 
\item We provide a mathematical analysis, based on the NTK theory, to illustrate the effectiveness of variable scaling in training PINNs for solving the HJB equation. \end{itemize}

\subsection{Organization of the paper}
The remainder of this paper is organized as follows: Section 2 presents preliminary results on the minimum eradication time in the context of HJB equations. Section 3 reviews variable-scaling PINNs and includes an error analysis specific to our HJB equation. Section 4 details the training procedure, while Section 5 presents the experimental results. Finally, Section 6 concludes the paper by summarizing the key findings, discussing the limitations of the approach, and providing directions for future research.

\section{Hamilton--Jacobi--Bellman equation for the minimum eradication time}

\subsection{Minimum eradication time problem}\label{sec:prelim}
Throughout the paper, we consider a time-homogeneous controlled SIR model where the infection and recovery rates are constant:
\[
\beta(t) \equiv \beta \quad \text{and} \quad \gamma(t) \equiv \gamma.
\]
Given a threshold \(\mu > 0\), initial conditions \(x \geq 0\) and \(y \geq \mu\), and a vaccination control \(r(t) \in \mathcal{U} = \{r: [0, \infty) \rightarrow [0, 1]\}\), we define the eradication time as
\[
u^r(x, y) := \min \{t > 0 : I(t) = \mu\},
\]
where \(S^r\) and \(I^r\) satisfy
\begin{equation}\label{eq:controlled_raw}
    \begin{cases}
        \dot S^r = - \beta S^r I^r - r S^r,\\
        \dot I^r = \beta S^r I^r - \gamma I^r,
    \end{cases}
\end{equation}
with $(S^r(0),I^r(0)) = (x,y)$.

A crucial property of \(u^r\) is that for each \(t \in [0, u^r(x, y)]\),
\begin{equation}\label{eq:babyDPP}
u^r(x, y) = t + u^r(S^r(t), I^r(t)),
\end{equation}
which is known as the dynamic programming principle (DPP). This relationship can be interpreted as follows: at time \(t\), the remaining eradication time from the state \((S^r(t),I^r(t))\) is \(u^r(x, y)\).

Finally, the minimum eradication time is defined as
\begin{equation}\label{eq:value_function}
u(x, y) := \min_{r \in \mathcal{U}} u^r(x, y).
\end{equation}
The mathematical properties of this value function have been extensively studied in~\cite{hynd2021eradication}. For the convenience of readers, we summarize the theoretical results provided in~\cite{hynd2021eradication}.

Thanks to~\eqref{eq:babyDPP}, it is known that \(u\) is the unique viscosity solution to the following HJB equation.

\begin{thm}[Theorem 1.2 of~\cite{hynd2021eradication}]
For \(\mu > 0\), the value function \(u\) defined in~\eqref{eq:value_function} is the unique viscosity solution to
\begin{align}\label{eq:hj}
\beta x y \partial_x u + x (\partial_x u)^+ + (\gamma - \beta x) y \partial_y u = 1 \quad\text{in}\quad (0,\infty) \times (\mu, \infty), 
\end{align}
with boundary conditions
\[
u(0, y) = \frac{1}{\gamma} \ln \bigg(\frac{y}{\mu}\bigg) \quad \text{for} \quad y \geq \mu,
\]
and
\[
u(x, \mu) = 0 \quad \text{for} \quad 0 \leq x \leq \frac{\gamma}{\beta}.
\]
\end{thm}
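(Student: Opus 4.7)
The plan is to prove the theorem in four steps: verify the two boundary conditions directly from the ODE, upgrade the single-control DPP \eqref{eq:babyDPP} to a value-function DPP, derive the HJB equation in the viscosity sense, and finally obtain uniqueness via a comparison principle.

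For the boundary data, the restriction $x = 0$ reduces the system to $\dot S \equiv 0$ and $\dot I = -\gamma I$, so $I(t) = y e^{-\gamma t}$ is control-independent and reaches $\mu$ at $t = \frac{1}{\gamma}\ln(y/\mu)$, which gives the Dirichlet data on $\{x=0\}$. On $\{y = \mu, 0 \leq x \leq \gamma/\beta\}$ the bound $\dot I|_{t=0} = (\beta x - \gamma)\mu \leq 0$ shows that $I$ stays at or below $\mu$ under any admissible control, so $u(x,\mu) = 0$.

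For the interior equation, I would first establish the value-function DPP
\[
u(x,y) = \inf_{r \in \mathcal{U}} \bigl\{ t + u(S^r(t), I^r(t)) \bigr\}
\]
for all small $t > 0$ by concatenating controls (for $\leq$) and using an approximate minimizer (for $\geq$), then translate it into viscosity sub/supersolution inequalities. Concretely, for a smooth $\phi$ touching $u$ from above at an interior $(x_0,y_0)$, substituting a constant control $r \equiv a \in [0,1]$ into the DPP, Taylor-expanding $\phi$, dividing by $t$, and sending $t \downarrow 0$ yields
\[
1 - \beta x_0 y_0\, \partial_x\phi - a x_0\, \partial_x\phi + (\beta x_0 - \gamma) y_0\, \partial_y\phi \geq 0
\]
for every $a \in [0,1]$; optimizing in $a$ reproduces the $-x_0(\partial_x\phi)^+$ term and gives the subsolution inequality. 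The supersolution inequality is obtained symmetrically by inserting a near-optimal control into the DPP.

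For uniqueness, the Hamiltonian $H(x,y,p) = \beta x y\, p_1 + x (p_1)^+ + (\gamma - \beta x) y\, p_2$ is continuous, convex in $p$, and locally Lipschitz in $(x,y)$, so a standard doubling-of-variables comparison principle applies on bounded subdomains; convexity of $H$ in $p$ absorbs the non-smoothness of the positive-part term. The main obstacle, which I expect to be the technical core of the proof, is propagating comparison to the unbounded domain $(0,\infty)\times(\mu,\infty)$ and correctly handling the state-constraint-like piece of the boundary near the corner $(\gamma/\beta,\mu)$ where no Dirichlet data is prescribed. I would address this by truncating to a large box $[0,R]\times[\mu,R]$, using the explicit logarithmic bound $u(x,y) \leq u^{r\equiv 1}(x,y)$ to rule out losses at infinity, and adding a standard penalty term near the untreated boundary so that the comparison argument closes in the limit $R \to \infty$.
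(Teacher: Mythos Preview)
The paper does not contain a proof of this theorem: it is stated as Theorem~1.2 of the cited reference \cite{hynd2021eradication} and is merely quoted here as background for the PINN methodology, so there is no in-paper argument against which to compare your proposal.

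That said, your outline is the standard route and is essentially correct. The boundary computations are fine (at the corner $x=\gamma/\beta$, $y=\mu$ you would also want to note that $\dot S\le 0$, so the trajectory immediately enters the region $\beta S<\gamma$ where $\dot I<0$, closing the edge case). The DPP-to-viscosity derivation is the textbook one; your sign bookkeeping in the subsolution step is right and correctly recovers the $x(\partial_x\phi)^+$ term via $\max_{a\in[0,1]} a x\,\partial_x\phi$. For uniqueness, your identification of the two genuine issues---the unbounded domain and the missing Dirichlet data on the portion $\{y=\mu,\ x>\gamma/\beta\}$ of the boundary---matches what the original reference has to handle; the truncation-plus-barrier strategy you sketch is the right shape, and the a priori bound $u\le u^{r\equiv 1}$ is exactly the ingredient that controls behavior at infinity.
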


In the next section, we proceed with identifying the optimal control that minimizes the eradication time.

\subsection{Optimal bang-bang control}
It is known from~\cite{bolzoni2017time} that the optimal control \(r\) takes the form of bang-bang control, defined as
\begin{equation}\label{eq:bangbang}
r_\tau(t) =
\begin{cases}
0, &\quad  t < \tau,\\
1, &\quad t \geq \tau.
\end{cases}
\end{equation}
Here, $r_\tau$ and $\tau$ are referred to as the switching control and switching time, respectively. We now recall known results on the minimum eradication time and the optimal switching control.

\begin{thm}[Theorem 1.4 of~\cite{hynd2021eradication}]\label{thm:dpp}
Let \(\mu > 0\), \(x \geq 0\), and \(y \geq \mu\). Then,
\begin{equation}\label{eq:switch}
u(x, y) = \min_{\tau \geq 0} \{\tau + u^{r_0}(S(\tau), I(\tau))\},
\end{equation}
where \(S(t)\) and \(I(t)\) satisfy the uncontrolled SIR model:
\begin{equation}\label{eq:ode_uncontrolled}
\begin{cases}
\dot S = -\beta S I,\\
\dot I = \beta S I - \gamma I,
\end{cases}
\end{equation}
with $(S(0),I(0)) = (x,y)$. Moreover, any $\tau$ for which the minimum in \eqref{eq:switch} is achieved is the switching time of optimal switching control.
\end{thm}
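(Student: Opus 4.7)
The plan is to establish the identity in two steps. First, I would prove the upper bound $u(x,y) \le \min_{\tau \ge 0}\{\tau + u^{r_0}(S(\tau), I(\tau))\}$ by exhibiting switching controls that realize every value on the right-hand side. Second, I would show the reverse inequality by demonstrating that every optimal (or nearly optimal) control can be reduced to a single-switch control of the form $r_\tau$, combining the bang-bang principle with a geometric single-switch argument specific to SIR.

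For the upper bound, fix $\tau \ge 0$ and consider the switching control $r_\tau$ from \eqref{eq:bangbang}. On $[0,\tau]$ the control vanishes, so $(S^{r_\tau}, I^{r_\tau})$ coincides with the uncontrolled flow \eqref{eq:ode_uncontrolled} and $(S^{r_\tau}(\tau), I^{r_\tau}(\tau)) = (S(\tau), I(\tau))$. On $[\tau,\infty)$ the control equals $1$, which is exactly $r_0$ restarted from $(S(\tau), I(\tau))$. Time-homogeneity of \eqref{eq:controlled_raw} together with the DPP \eqref{eq:babyDPP} then yields
$$u^{r_\tau}(x,y) = \tau + u^{r_0}(S(\tau), I(\tau)),$$
and minimizing over $\tau$ delivers the upper bound since $u(x,y) \le u^{r_\tau}(x,y)$ for every $\tau$.

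For the lower bound, I would exploit the fact that the controlled dynamics \eqref{eq:controlled_raw} are affine in $r$: the control enters only through $-rS$ in $\dot S$. Minimizing the linear expression $-r x \partial_x u$ over $r \in [0,1]$ inside the DPP produces the bang-bang feedback $r^\ast = 1$ when $\partial_x u > 0$ and $r^\ast = 0$ when $\partial_x u < 0$, which is precisely why the $(\partial_x u)^+$ term appears in \eqref{eq:hj}. Equivalently, via the Pontryagin Maximum Principle, the switching function is proportional to the costate $p_1$ multiplied by $S$, and its sign determines the optimal $r$. To collapse a general bang-bang control into a single switch, I would analyze the sign changes of $\partial_x u$ (equivalently, of $p_1$) along optimal trajectories, using that $S$ is strictly decreasing under any admissible control and that the target set $\{y=\mu\}$ has simple structure.

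The main obstacle is the single-switch claim: one must rule out multiple sign changes of the switching function and fix the direction of the unique switch as from $r=0$ to $r=1$ rather than the reverse. This requires combining the monotonicity of $S$ along trajectories, the one-peak behavior of $I$ under SIR-type dynamics, and the boundary conditions $u(0,y) = \gamma^{-1}\ln(y/\mu)$ and $u(x,\mu) = 0$ to constrain how $\partial_x u$ can change sign. Once the switching structure is established, the identity \eqref{eq:switch} follows, and any $\tau$ achieving the minimum is, by construction of the associated $r_\tau$, the switching time of an optimal switching control.
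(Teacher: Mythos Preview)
The paper does not actually prove this theorem: it is quoted as Theorem~1.4 of \cite{hynd2021eradication} and invoked as a black box for the numerical framework that follows. There is therefore no ``paper's own proof'' to compare your proposal against.

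On its own merits, your two-step outline is the natural one and is essentially the route taken in the cited reference. The upper bound is correct as you wrote it: for each $\tau\ge 0$ the control $r_\tau$ is admissible, coincides with the uncontrolled flow on $[0,\tau]$, and with $r_0$ thereafter, so the DPP identity \eqref{eq:babyDPP} gives $u^{r_\tau}(x,y)=\tau+u^{r_0}(S(\tau),I(\tau))$ and the bound follows. For the lower bound, you correctly locate the entire difficulty in the single-switch claim. In \cite{hynd2021eradication} (building on \cite{bolzoni2017time}) this is handled via the Pontryagin Maximum Principle: one writes the adjoint system, identifies the switching function as $p_1 S$, and shows from the adjoint ODEs that $p_1$ can change sign at most once along an extremal and only in the direction forcing a $0\to 1$ switch. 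Your proposal names exactly these ingredients but stops short of executing the adjoint analysis; to turn the sketch into a proof you would need to write down the costate equations explicitly and derive the monotonicity of the switching function, rather than appealing informally to the one-peak shape of $I$ and the monotonicity of $S$.
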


To solve for the optimal switching time $\tau$, it is necessary to compute \(u^{r_0}\). Recalling the DPP~\eqref{eq:babyDPP}, when \(r \equiv 1\), i.e., \(r = r_0\), we have the following identity:
\begin{equation}\label{eq:r0}
u^{r_0}(x, y) = t + u^{r_0}(S^{r_0}(t), I^{r_0}(t)),
\end{equation}
where \(S^{r_0}\) and \(I^{r_0}\) satisfy
\begin{equation}\label{eq:controlled}
\begin{cases}
\dot S^{r_0} &= - \beta S^{r_0} I^{r_0} - S^{r_0},\\
\dot I^{r_0} &= \beta S^{r_0} I^{r_0} - \gamma I^{r_0},
\end{cases}
\end{equation}
with $(S^{r_0}(0),I^{r_0}(0)) = (x,y)$. Taking the time derivative of both sides of~\eqref{eq:r0}, we deduce that
\begin{equation*}
\begin{split}
0 &= 1 + \frac{d}{dt} u^{r_0}(S^{r_0}(t), I^{r_0}(t)) \\
&= 1 + \dot S^{r_0}(t) \partial_x u^{r_0} + \dot I^{r_0}(t) \partial_y u^{r_0} \\
&= 1 + (-\beta S^{r_0}(t) I^{r_0}(t) - S^{r_0}(t)) \partial_x u^{r_0} + (\beta S^{r_0}(t) I^{r_0}(t) - \gamma I^{r_0}(t)) \partial_y u^{r_0}.
\end{split}
\end{equation*}
Setting \(t = 0\) and using the dynamics~\eqref{eq:controlled}, we obtain:
\begin{equation*}\label{eq:u0}
\beta x y \partial_x u^{r_0} + x \partial_x u^{r_0} + (\gamma - \beta x) y \partial_y u^{r_0} = 1 \quad \text{in}\quad (0, \infty) \times (\mu, \infty),
\end{equation*}
where \(u^{r_0}\) satisfies the same boundary conditions as \(u\). 

We finish this section with well-established properties of the optimal switching time $\tau^*$ such that $u(x,y)=u^{\tau^*}(x,y)$. Defining 
\[
\mathcal{S}:=\{(x,y): u(x,y) = u^{r_0}(x,y)\},
\]
Corollary 6.2 in \cite{hynd2021eradication} yields that
\begin{equation}\label{eq:u0_prop}
\begin{cases}
\partial_x u^{r_0}(x,y) \geq 0 \quad &\text{for}\quad (x,y) \in \mathcal{S},\\ 
\partial_x u^{r_0}(S(\tau^*),I(\tau^*)) = 0 \quad &\text{for} \quad (x,y) \in \mathcal{S}^C ,
\end{cases}
\end{equation}
where $(S,I)$ satisfies~\eqref{eq:ode_uncontrolled} with $(S(0),I(0))=(x,y)$.

Furthermore, by Corollary 6.4 in~\cite{hynd2021eradication}, we have that 
\begin{equation}\label{eq:u_prop}
\begin{cases}
\partial_x u(x,y) \geq 0 \quad &\text{for}\quad (x,y) \in \mathcal{S},\\
\partial_x u(x,y) \leq 0 \quad &\text{for}\quad (x,y) \in \mathcal{S}^C.
\end{cases}
\end{equation}

Based on the theoretical properties of the minimum eradication time, we propose a training procedure to solve~\eqref{eq:hj} and compute the optimal switching time through a PINN framework, which does not require spatial discretization.

\section{Variable-scaling physics-informed neural networks}
In this section, we explain the variable-scaling physics-informed neural network (VS-PINN), which is a crucial component of our method. For completeness, we begin with an overview of PINNs.

\subsection{Physics-informed neural networks}
PINNs are trained using a loss function that enables the neural network to approximate a solution satisfying both the differential equation and the initial or boundary conditions. Specifically, we focus on solving a partial differential equation (PDE) with a boundary condition, as the problem we address falls into this category. Suppose we have a bounded open domain \(\Omega \subset \mathbb{R}^d\) and the following equations:
\begin{align*}
    \mathcal{D}[u](\mathbf{x}) = f(\mathbf{x}) &\quad \text{in}\quad \Omega,\\
    u(\mathbf{x}) = g(\mathbf{x})&\quad\text{on}\quad \partial\Omega,
\end{align*}
where \(\mathcal{D}\) is a differential operator. 

We train a neural network \(u(x; \theta)\) using the loss function
\begin{equation}\label{eq:loss}
\mathcal{L} = \lambda_r \mathcal{L}_r + \lambda_b \mathcal{L}_b,
\end{equation}
where the residual loss \(\mathcal{L}_r\) and the boundary loss \(\mathcal{L}_b\) are defined 
as 
\[
\mathcal{L}_r = \frac{1}{N_r} \sum_{i=1}^{N_r} |\mathcal{D}[u](\mathbf{x}_r^i) - f(\mathbf{x}_r^i)|^2, \quad
\mathcal{L}_b = \frac{1}{N_b} \sum_{j=1}^{N_b} |u(\mathbf{x}_b^j) - g(\mathbf{x}_b^j)|^2.\]
Here, the residual data points \(\mathbf{x}_r^i \in \Omega\) and the boundary data points \(\mathbf{x}_b^j \in \partial\Omega\) are typically sampled randomly from uniform distributions. The weights \(\lambda_r\), \(\lambda_b\), and the number of data points \(N_r\), \(N_b\) are tunable parameters.

In our problem, we consider \(\Omega = (0, \infty) \times (\mu, \infty)\) for $\mu>0$ and define the operators \(\mathcal{D}\) and \(\mathcal{D}^0\) as
\begin{equation}\label{problem}
\begin{split}
    \mathcal{D} &= \beta xy \partial_x + x (\partial_x)^+ + (\gamma - \beta x)y \partial_y, \\
    \mathcal{D}^0 &= \beta xy \partial_x + x \partial_x + (\gamma - \beta x)y \partial_y,
\end{split}
\end{equation}
where \((\partial_x)^+u = (\partial_x u)^+\). We solve for \(u\) and \(u^{r_0}\) satisfying
\begin{equation}\label{eq:problem_main}
    \mathcal{D}[u] = 1 \quad \text{and} \quad \mathcal{D}^0[u^{r_0}] = 1 \quad\text{in}\quad \Omega,
\end{equation}
with the boundary conditions
\begin{equation}\label{g}
\begin{cases}
    u(0, y) = u^{r_0}(0, y) = \frac{1}{\gamma}\ln\bigg(\frac{y}{\mu}\bigg) &\quad \text{for } y \geq \mu,\\
    u(x, \mu) = u^{r_0}(x, \mu) = 0 &\quad \text{for } 0 \leq x \leq \frac{\gamma}{\beta}.
\end{cases}
\end{equation}
A schematic diagram of the framework is presented in Figure~\ref{fig:framework_u_pinn}.

\begin{figure}[ht]
    \centering
    \includegraphics[width=\linewidth]{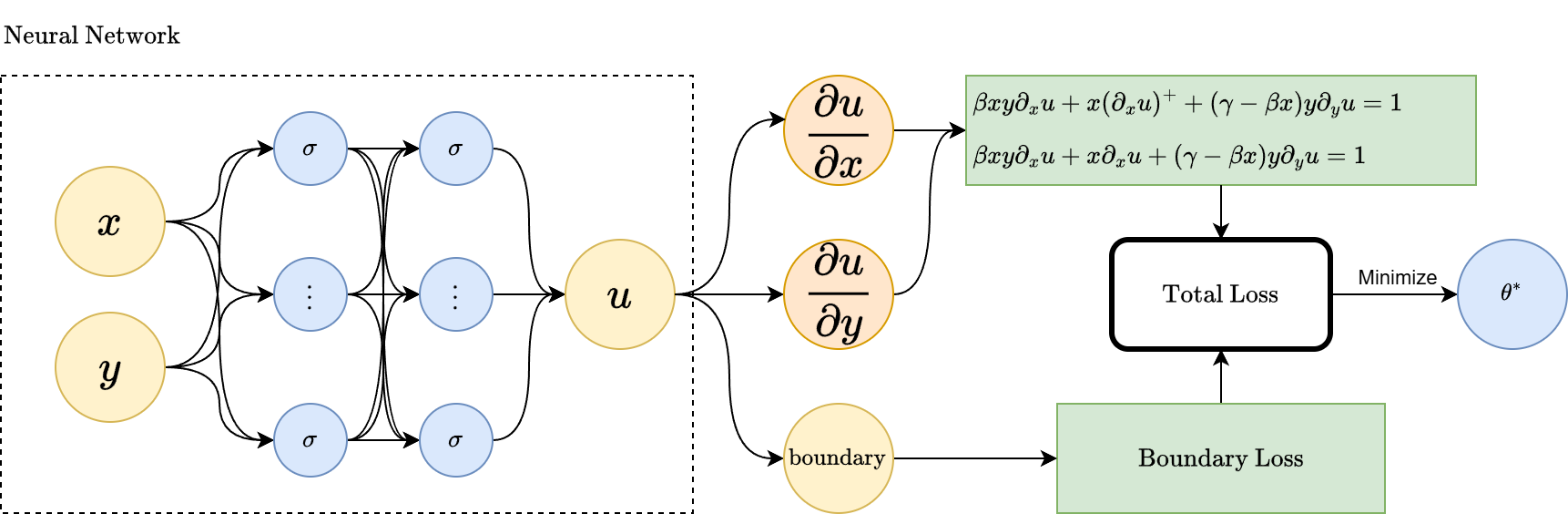}
    \caption{Training \(u\) and \(u^{r_0}\) under the PINN framework.}
    \label{fig:framework_u_pinn}
\end{figure}

\subsection{Variable scaling}
\label{sec:variable_scaling}
In~\cite{VS-PINN}, the authors proposed a simple method that improves the performance of PINNs. The idea is to scale the variables so that the domain of the target function is magnified, making the function less stiff.

Let us begin with a change of variable $\mathbf{x}={\hat{\mathbf{x}}}/{N}$ and setting
\[\hat{u}(\hat{\mathbf{x}}):=u(\hat{\mathbf{x}}/N).\]
The function $\hat{u}$ is a scaled version of the function $u$, with its domain expanded by a factor of $N$, causing its stiffness to decrease by a factor of 
$N$ compared to $u$. Thus, our goal is to train $\hat{u}$ instead of directly training $u$. After training $\hat{u}$, we can simply recover our original target $u$ by substituting
\[u(\mathbf{x})=\hat{u}(N\mathbf{x}).\]

In our problem, we will use different scaling parameters $N_x$, $N_y$ for each component $x, y$ and also apply translations, which means we use
\[\hat{x}=N_x x+b_x,\quad \hat{y}=N_y y+b_y.\]
Thus, after training $\hat{u}$, we recover $u$ by 
\[u(x,y)=\hat{u}(N_x x+b_x, N_y y+b_y).\]
Recall that the differential operator of our problem in ~\eqref{problem} takes the form of $\mathcal{D}=F(x, y, \partial_x, \partial_y)$.
The modified version of our problem has the domain $\hat{\Omega} = (b_x, \infty)\times (N_y\mu+b_y, \infty)$. 

By the chain rule,
\begin{align*}
    &\partial_x u(x,y) = \partial_{\hat{x}}u(x,y) \cdot \partial_x \hat{x} = \partial_{\hat{x}}\hat{u}(\hat{x}, \hat{y}) \cdot N_x,\\
    &\partial_y u(x,y) = \partial_{\hat{y}}u(x,y) \cdot \partial_y \hat{y} = \partial_{\hat{y}}\hat{u}(\hat{x}, \hat{y}) \cdot N_y,
\end{align*}
we define operators $\hat{\mathcal{D}}$ and $\hat{\mathcal{D}^0}$ as
\begin{equation*}
    \hat{\mathcal{D}} = \beta \frac{(\hat{x}-b_x)(\hat{y}-b_y)}{N_y}\partial_{\hat{x}} + (\hat{x}-b_x)(\partial_{\hat{x}})^+ +(\gamma-\beta \frac{\hat{x}-b_x}{N_x})(\hat{y}-b_y)\partial_{\hat{y}},
\end{equation*}
and
\begin{equation*}
    \hat{\mathcal{D}^0} = \beta \frac{(\hat{x}-b_x)(\hat{y}-b_y)}{N_y}\partial_{\hat{x}} + (\hat{x}-b_x)\partial_{\hat{x}} +(\gamma-\beta \frac{\hat{x}-b_x}{N_x})(\hat{y}-b_y)\partial_{\hat{y}}.
\end{equation*}
We then solve for $\hat{u}$ and $\hat{u}^{r_0}$ satisfying
\[\hat{\mathcal{D}}[\hat{u}]=1\quad\text{and}\quad \hat{\mathcal{D}^0}[\hat{u}^{r_0}]=1 \quad\text{in}\quad\hat\Omega\]
with
\begin{equation}\label{g-bar}
\begin{cases}
    \hat{u}(0,\hat{y}) = \hat{u}^{r_0}(0,\hat{y}) = \frac{1}{\gamma}\ln\bigg(\frac{\hat{y}-b_y}{N_y\mu}\bigg)\quad&\text{for}\quad \hat{y}\geq N_y\mu+b_y,\\
    \hat{u}(\hat{x},\mu) = \hat{u}^{r_0}(\hat{x},\mu)=0\quad&\text{for}\quad b_x\leq \hat{x}\leq \frac{N_x\gamma}{\beta}+b_x.
\end{cases}
\end{equation}
As a result, we should use the loss function in \eqref{eq:loss} modified by replacing $\mathcal{D}$ (or $\mathcal{D}^0$), $f, g, u(\mathbf{x};\theta)$ (or $u^{r_0}(\mathbf{x};\theta)$)  with $\hat{\mathcal{D}}$ (or $\hat{\mathcal{D}^0}$), $\hat{f}, \hat{g}, \hat{u}(\hat{\mathbf{x}};\theta)$ (or $\hat{u}^{r_0}(\hat{\mathbf{x}};\theta)$) in order to train $\hat{u}(\hat{\mathbf{x}};\theta)$ (or $\hat{u}^{r_0}(\hat{\mathbf{x}};\theta)$), where $f(x) = \hat{f}(\hat{x})=1$, $g$ and $\hat{g}$ are given in \eqref{g} and \eqref{g-bar} respectively, that is,
\[
\begin{cases}
    g(0, y) = \frac{1}{\gamma}\ln\bigg(\frac{y}{\mu}\bigg) &\quad \text{for } y \geq \mu,\\
    g(x, \mu) = 0 &\quad \text{for } 0 \leq x \leq \frac{\gamma}{\beta},
\end{cases}
\]
and
\[
\begin{cases}
    \hat{g}(0,\hat{y}) = \frac{1}{\gamma}\ln\bigg(\frac{\hat{y}-b_y}{N_y\mu}\bigg)\quad&\text{for}\quad \hat{y}\geq N_y\mu+b_y,\\
    \hat{g}(\hat{x},\mu) = 0\quad&\text{for}\quad b_x\leq \hat{x}\leq \frac{N_x\gamma}{\beta}+b_x.
\end{cases}
\]

In the following section, we analyze the effect of the scaling factors.

\subsection{Theoretical support via neural tangent kernel}
\label{sec:ntk}
We now establish the theoretical foundation for the efficiency of the variable scaling method in our problem through the lens of Neural Tangent Kernel (NTK)~\cite{jacot2018neural}, a widely used framework for analyzing the training dynamics of deep neural networks. In \cite{wang2022and}, the NTK theory was extended to PINNs. Applying this theory to a simple one-dimensional Poisson equation with Dirichlet boundary conditions, the authors of \cite{VS-PINN} demonstrated that the variable scaling method enhances training efficiency in PINNs. Building on these results, we establish similar arguments to the equation (\ref{eq:hj}) and illustrate the effectiveness of the variable scaling method in our setting. To avoid redundancy, we focus on solving for \(u\).

\subsubsection{Neural tangent kernel}
Given that the parameter $\theta$ of a PINN $u(\mathbf{x};\theta)$ is trained through the gradient flow 
\[\frac{d\theta}{dt}=-\nabla_\theta \mathcal{L}\]
with respect to the loss (\ref{eq:loss}) with $\lambda_r=\lambda_b=1/2$, it is proved in \cite{wang2022and} that the evolution of $u$ and $\mathcal{D}[u]$ follows

\begin{equation*}
\begin{bmatrix}
\frac{d u(\mathbf{x}_b; \theta(t))}{d t} \\
\frac{d\mathcal{D}[u](\mathbf{x}_r; \theta(t))}{d t}
\end{bmatrix}
=
-
\begin{bmatrix}
K_{uu}(t) & K_{ur}(t) \\
K_{ru}(t) & K_{rr}(t)
\end{bmatrix}
\begin{bmatrix}
u(\mathbf{x}_b; \theta(t)) - g(\mathbf{x}_b) \\
\mathcal{D}[u](\mathbf{x}_r; \theta(t)) - f(\mathbf{x}_r)
\end{bmatrix},
\end{equation*}
where \(K_{uu}(t) \in \mathbb{R}^{N_b \times N_b}\), \(K_{rr}(t) \in \mathbb{R}^{N_r \times N_r}\), and \(K_{ru}(t) = [K_{ur}(t)]^\top \in \mathbb{R}^{N_r \times N_b}\), whose \((i, j)\)th entries are given by
\begin{align*}
(K_{uu})_{ij}(t) &= \left\langle \frac{d u(\mathbf{x}_b^i; \theta(t))}{d \theta}, \frac{d u(\mathbf{x}_b^j; \theta(t))}{d \theta} \right\rangle, \\
(K_{ru})_{ij}(t) &= \left\langle \frac{d \mathcal{D}[u](\mathbf{x}_r^i; \theta(t))}{d \theta}, \frac{d u(\mathbf{x}_b^j; \theta(t))}{d \theta} \right\rangle, \\
(K_{rr})_{ij}(t) &= \left\langle \frac{d \mathcal{D}[u](\mathbf{x}_r^i; \theta(t))}{d \theta}, \frac{d \mathcal{D}[u](\mathbf{x}_r^j; \theta(t))}{d \theta} \right\rangle.
\end{align*}
We call the matrix
$
K(t) =
\begin{bmatrix}
K_{uu}(t) & K_{ur}(t) \\
K_{ru}(t) & K_{rr}(t)
\end{bmatrix}
$
the NTK of a training dynamics of $u(\mathbf{x};\theta)$ via PINN. 

In \cite{wang2022and}, it is proved that the NTK of PINN at initialization, i.e., the kernel at $t=0$, converges to a deterministic kernel and remains the same in the infinite-width limit when $\theta(0)$ is assumed to follow the standard normal distribution. In \cite{VS-PINN}, it is presented that variable scaling can enhance the performance of PINNs by analyzing how the initial NTK of the one-dimensional Poisson equation evolves with variable scaling. We follow their argument to deduce a similar result for the equation~\eqref{eq:hj}.

By definition, $K(t)$ is positive semi-definite. The eigenvalues of $K(t)$ are related to the convergence rate of the training dynamics. Since it is difficult to directly compute all eigenvalues of $K(t)$, we instead consider the average of the eigenvalues $\frac{\operatorname{Tr}(K(t))}{N_r+N_b}$. It is a weighted average of $\frac{\operatorname{Tr}(K_{uu}(t))}{N_b}$ and $\frac{\operatorname{Tr}(K_{rr}(t))}{N_r}$ as follow:
\[\frac{\operatorname{Tr}(K(t))}{N_r+N_b}=\frac{N_b}{N_r+N_b}\frac{\operatorname{Tr}(K_{uu}(t))}{N_b}+\frac{N_r}{N_r+N_b}\frac{\operatorname{Tr}(K_{rr}(t))}{N_r}.\]

For notiational simplicity, we set the translation parameters $b_x=b_y=0$ and omit the hat ( $\hat{}$ ) notation in this subsection, thereby,
\[\mathcal{D}=\beta\frac{xy}{N_y}\partial_x + x(\partial_x)^+ +(\gamma-\beta\frac{x}{N_x})y\partial_y,
\]
and let our neural network be
\begin{equation}\label{eq:onelayer}
u(x,y;\theta)=\frac{1}{\sqrt{d_1}}\sum_{k=1}^{d_1}W_2^k \sigma(W_1^{1k}x+W_1^{2k}y+b_1^k)+b_2
\end{equation}
with $\sigma=\max\{0,x^3\}$ since the activation function is required to be twice differentiable in NTK theory ~\cite{jacot2018neural,wang2022and} to ensure stationarity of NTK.
We initialize the parameters $W_1^{ik}, W_2^k, b_1^k, b_2$ from the standard normal distribution $\mathcal{N}(0,1)$. 

We focus on training PINN within the rectangular domain 
\begin{equation}\label{domain-restricted}
[0,N_x\ell_x]\times [N_y\mu, N_y(\mu+\ell_y)],
\end{equation}
and introduce an augmented boundary given by
\begin{equation}\label{extended-boundary}
[0, N_x\ell_x]\times\{N_y \mu\}\cup \{0\}\times [N_y\mu, N_y(\mu+\ell_y)]\cup [0,N_x\ell_x]\times\{N_y(\mu+\ell_y)\}.
\end{equation}
Although this approach necessitates additional simulations to obtain ground-truth data for the augmented boundary given as~\eqref{extended-boundary}, the associated computational cost is significantly lower compared to simulating the entire two-dimensional domain. To proceed we sample the boundary points $\{x_b^j, y_b^j\}_{j=1}^{N_b}$ uniformly from the augmented boundary \eqref{extended-boundary}
and sample the residual points $\{x_r^i, y_r^i\}_{i=1}^{N_r}$ uniformly from the restricted domain \eqref{domain-restricted}.
Note that the domain \eqref{domain-restricted}
is obtained by scaling the domain $[0,\ell_x] \times [\mu,\mu+\ell_y]$ ($\ell_x,\ell_y>0$).

\subsubsection{Convergence of NTK for HJB}
We now present the main result, which demonstrates that the average eigenvalues of the deterministic kernel grows in the scaling factors.
\begin{thm}\label{thm:ntk}
Under the sampling regime of data points and the differential operator as described above, the following convergences hold as the width $d_1$ of \eqref{eq:onelayer} goes to infinity:
\[\frac{\operatorname{Tr}(K_{uu}(0))}{N_b}\overset{\mathcal{P}}{\rightarrow} O\left((N_x^2+N_y^2+1)^3\right)\quad\text{and}\quad\frac{\operatorname{Tr}(K_{rr}(0))}{N_r}\overset{\mathcal{P}}{\rightarrow} O\left(P(N_x, N_y)\right),
\]
where $\overset{\mathcal{P}}{\rightarrow}$ represents the convergence in probability and $P(x,y)$ is a degree $3$ homogeneous polynomial in $x^2$ and $y^2$ with positive coefficients.
Moreover, if $N_x=N_y=N$, then 
\[\frac{\operatorname{Tr}(K(0))}{N_r+N_b}\overset{\mathcal{P}}{\rightarrow} O(N^6)\footnote{\(f(x) = O(g(x))\) implies that there exist constants \(C > 0\) and \(x_0\) such that \(|f(x)| \leq C |g(x)|\) for all \(x \geq x_0\).}.\]
\end{thm}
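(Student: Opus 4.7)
The plan is to decompose each diagonal entry of the NTK blocks into a mean of i.i.d.\ contributions indexed by the hidden width, invoke the weak law of large numbers (via Chebyshev) to pass to the infinite-width limit, and then track the polynomial order of the resulting Gaussian expectations in the scaling factors $N_x,N_y$. I would follow the same overall architecture used in \cite{wang2022and,VS-PINN} but adapted to the two-dimensional, nonsmooth Hamiltonian in \eqref{problem}.

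First I would write the gradients of the one-hidden-layer network \eqref{eq:onelayer} at a generic point $(x,y)$: with $z_k=W_1^{1k}x+W_1^{2k}y+b_1^k$ we have $\partial_{W_2^k}u=d_1^{-1/2}\sigma(z_k)$, $\partial_{W_1^{1k}}u=d_1^{-1/2}W_2^k\sigma'(z_k)x$, $\partial_{W_1^{2k}}u=d_1^{-1/2}W_2^k\sigma'(z_k)y$, $\partial_{b_1^k}u=d_1^{-1/2}W_2^k\sigma'(z_k)$, and $\partial_{b_2}u=1$. Summing squares,
\[(K_{uu})_{ii}(0)=1+\frac{1}{d_1}\sum_{k=1}^{d_1}H(z_k,W_2^k;x_b^i,y_b^i),\]
for an explicit polynomial-in-activations functional $H$. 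The summands are i.i.d.\ in $k$ with finite variance, so Chebyshev gives convergence in probability to $\mathbb{E}[H]$ as $d_1\to\infty$. Because $\sigma(z)^2=z^6\mathbf{1}_{z>0}$ and $\sigma'(z)^2=9z^4\mathbf{1}_{z>0}$, while $z_k\sim\mathcal{N}(0,\tau^2)$ with $\tau^2=x^2+y^2+1$, the truncated Gaussian moments give $\mathbb{E}[\sigma(z_k)^2]=O(\tau^6)$ and $\mathbb{E}[\sigma'(z_k)^2]=O(\tau^4)$; together with the $x^2,y^2,1$ weights in $H$ this yields $\mathbb{E}[H]=O(\tau^6)$. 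Averaging over boundary samples drawn from \eqref{extended-boundary}, on which $|x|\lesssim N_x$ and $|y|\lesssim N_y$ so that $\tau^2\lesssim N_x^2+N_y^2+1$, proves the first claim.

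For $K_{rr}$ I would handle the nonsmooth term $x(\partial_x u)^+$ by the a.s.\ chain rule $\nabla_\theta(\partial_x u)^+=\mathbf{1}_{\partial_x u>0}\nabla_\theta\partial_x u$, valid because the event $\{\partial_x u=0\}$ has probability zero under the Gaussian initialization. Thus
\[\nabla_\theta\mathcal{D}[u]=\Bigl(\tfrac{\beta xy}{N_y}+x\mathbf{1}_{\partial_x u>0}\Bigr)\nabla_\theta\partial_x u+\Bigl(\gamma-\tfrac{\beta x}{N_x}\Bigr)y\,\nabla_\theta\partial_y u.\]
Explicit differentiation yields $\partial_\theta\partial_x u$ and $\partial_\theta\partial_y u$ in terms of $\sigma'(z_k)$ and $\sigma''(z_k)=6z_k\mathbf{1}_{z_k>0}$; the same LLN argument then gives $\mathbb{E}\|\nabla_\theta\partial_x u\|^2,\mathbb{E}\|\nabla_\theta\partial_y u\|^2=O(\tau^4)$ in the width limit. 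On the residual domain \eqref{domain-restricted}, the scalar coefficients satisfy $|\beta xy/N_y+x|=O(N_x)$ and $|(\gamma-\beta x/N_x)y|=O(N_y)$, so by Cauchy--Schwarz the expected squared gradient is bounded by a sum of terms of the form $N_x^2\tau^4$, $N_y^2\tau^4$, and $N_xN_y\tau^4$, each a degree-$3$ monomial in $N_x^2,N_y^2$ with positive coefficient; after averaging over the residual samples this gives the polynomial $P(N_x,N_y)$. When $N_x=N_y=N$ both traces are $O(N^6)$, hence so is their convex combination, proving the last statement.

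The main obstacle is the careful bookkeeping of Gaussian moments involving the correlated pair $(W_1^{1k},z_k)$ together with the indicators $\mathbf{1}_{z_k>0}$. The standard remedy is the orthogonal decomposition $W_1^{1k}=(x/\tau^2)z_k+V$ with $V\perp z_k$, reducing every expectation to one-dimensional truncated moments $\mathbb{E}[Z^{2n}\mathbf{1}_{Z>0}]\propto\tau^{2n}$, so that the polynomial order in $N_x,N_y$ can be read off term by term. A secondary technical point is justifying the weak LLN uniformly in $(x,y)$ over the sampling region, which follows from uniform $L^2$ bounds on the summands $H$; and a conceptual point is that the nonsmoothness of $\sigma''$ and of $(\partial_x u)^+$ is immaterial because the exceptional set has zero Lebesgue measure in parameter space.
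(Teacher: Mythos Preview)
Your proposal is correct and follows essentially the same route as the paper: decompose each diagonal NTK entry into a width-indexed i.i.d.\ sum, pass to the limit via the law of large numbers, and bound the resulting Gaussian moments in terms of $\tau^2=x^2+y^2+1$ using the power-mean/Cauchy--Schwarz inequality on the sum of parameter-block contributions. The only cosmetic differences are that the paper organizes the $K_{rr}$ computation parameter-block by parameter-block (rather than grouping via $\nabla_\theta\partial_x u$ and $\nabla_\theta\partial_y u$) and evaluates the mixed moments $\mathbb{E}[X_1^2\sigma''(S)^2]$, $\mathbb{E}[X_1^2\sigma'(S)^2]$ by direct integration instead of your orthogonal decomposition $W_1^{1k}=(x/\tau^2)z_k+V$; your device is a bit cleaner but leads to the same bounds.
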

Before presenting the proof, we state a useful lemma regarding the moment bounds of Gaussian distributions, which will be used repeatedly throughout the proof. The proof of the lemma is omitted and can be found in~\cite{hogg2013introduction}.

\begin{lem}\label{lem:gaussian}
    If \( X \sim \mathcal{N}(0, \delta^2) \), then 
\[
\mathbb{E}[X^{2n}] = \frac{(2n)!}{2^n n!} \delta^{2n}
\quad \text{and} \quad
\mathbb{E}[X^{2n-1}] = 0 \quad \forall n \in \mathbb{N}.
\]
\end{lem}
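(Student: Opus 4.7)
The plan is a two-step reduction: first eliminate $\delta$ by scaling to the standard normal, then compute the standard-normal moments by symmetry (for odd orders) and a one-step recursion (for even orders).

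Writing $X = \delta Z$ with $Z \sim \mathcal{N}(0,1)$, one has $\mathbb{E}[X^m] = \delta^m \mathbb{E}[Z^m]$, so it suffices to prove $\mathbb{E}[Z^{2n-1}] = 0$ and $\mathbb{E}[Z^{2n}] = (2n)!/(2^n n!)$. The odd-order claim is immediate: the density $\phi(z) = (2\pi)^{-1/2}e^{-z^2/2}$ is even, hence $z^{2n-1}\phi(z)$ is an integrable odd function on $\mathbb{R}$, and its integral vanishes. (Integrability holds because $\phi$ has Gaussian tails, so all polynomial moments are finite.)

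For the even moments I would set $m_n := \mathbb{E}[Z^{2n}]$ and derive the recursion $m_n = (2n-1)m_{n-1}$ from integration by parts. Using $\phi'(z) = -z\phi(z)$,
\[
m_n = \int_{\mathbb{R}} z^{2n-1}\cdot z\phi(z)\,dz = -\int_{\mathbb{R}} z^{2n-1}\phi'(z)\,dz = (2n-1)\int_{\mathbb{R}} z^{2n-2}\phi(z)\,dz,
\]
where the boundary term $[-z^{2n-1}\phi(z)]_{-\infty}^{\infty}$ vanishes by Gaussian decay. Combined with the base case $m_0 = 1$, induction yields $m_n = (2n-1)!! := (2n-1)(2n-3)\cdots 1$. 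Factoring $(2n)! = [(2n)(2n-2)\cdots 2]\cdot[(2n-1)(2n-3)\cdots 1] = 2^n n!\cdot (2n-1)!!$ gives the closed form $(2n-1)!! = (2n)!/(2^n n!)$, completing the proof.

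Since the statement is elementary, there is no real obstacle; the only bookkeeping points are verifying the vanishing of the boundary term (trivial from $z^{k}e^{-z^2/2}\to 0$ as $|z|\to\infty$) and converting $(2n-1)!!$ to $(2n)!/(2^n n!)$. A more compact alternative would bypass the recursion by differentiating the moment generating function $M(t) = \mathbb{E}[e^{tX}] = \exp(\delta^2 t^2/2) = \sum_{n\ge 0}(\delta^2 t^2/2)^n/n!$, reading off $\mathbb{E}[X^k] = M^{(k)}(0)$, and matching Taylor coefficients against $\sum_k \mathbb{E}[X^k]t^k/k!$; the two formulas of the lemma then fall out directly from the absence of odd powers in $M(t)$ and from $\mathbb{E}[X^{2n}]/(2n)! = \delta^{2n}/(2^n n!)$.
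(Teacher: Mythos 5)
Your proof is correct: the reduction $X=\delta Z$, the symmetry argument for odd moments, the integration-by-parts recursion $m_n=(2n-1)m_{n-1}$ with vanishing boundary terms, and the double-factorial identity $(2n-1)!!=(2n)!/(2^n n!)$ are all sound, and the MGF alternative you sketch is equally valid. The paper itself omits the proof of this lemma and simply cites a standard statistics text, so there is nothing to compare against; your argument is exactly the classical one such references contain, and it would serve as a complete self-contained proof here.
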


\begin{proof}[proof of Theorem~\ref{thm:ntk}]

We first compute the limit of $\operatorname{Tr}(K_{uu}(0))/N_b$ as the width $d_1$ goes to infinity.
From the definition, we see that
\[\frac{\operatorname{Tr}(K_{uu}(0))}{N_b}=\frac{1}{N_b}\sum_{j=1}^{N_b}\left\langle\frac{du(x_b^j,y_b^j;\theta)}{d\theta},\frac{du(x_b^j,y_b^j;\theta)}{d\theta} \right\rangle,\]
and each summand is decomposed as
\begin{align}\label{eq:kernel-decomposition}
    &\left\langle\frac{du(x_b^j,y_b^j;\theta)}{d\theta},\frac{du(x_b^j,y_b^j;\theta)}{d\theta} \right\rangle = \left\langle\frac{du(x_b^j,y_b^j;\theta)}{dW_1^1},\frac{du(x_b^j,y_b^j;\theta)}{dW_1^1} \right\rangle+\left\langle\frac{du(x_b^j,y_b^j;\theta)}{dW_1^2},\frac{du(x_b^j,y_b^j;\theta)}{dW_1^2}\right\rangle\\
    &+\left\langle\frac{du(x_b^j,y_b^j;\theta)}{dW_2},\frac{du(x_b^j,y_b^j;\theta)}{dW_2}\right\rangle
    + \left\langle\frac{du(x_b^j,y_b^j;\theta)}{db_1},\frac{du(x_b^j,y_b^j;\theta)}{db_1}\right\rangle+
    \left\langle\frac{du(x_b^j,y_b^j;\theta)}{db_2},\frac{du(x_b^j,y_b^j;\theta)}{db_2}\right\rangle.\notag
\end{align}
For notational convenience, we write
\[X_1:=W_1^{1k}\sim \mathcal{N}(0,1),\quad Y_1:=W_1^{2k}\sim \mathcal{N}(0,1),\quad X_2:=W_2^{k}\sim \mathcal{N}(0,1),\quad Z:=b_1^k\sim \mathcal{N}(0,1).\]
Since $X_1, Y_1,$ and $Z$ are independent, we have that
\[S:=X_1 x_b^j+Y_1 y_b^j+Z\sim\mathcal{N}(0, (x_b^j)^2+(y_b^j)^2+1).\]

The first term of \eqref{eq:kernel-decomposition} is then expressed as
\[\left\langle\frac{du(x_b^j,y_b^j;\theta)}{d\theta},\frac{du(x_b^j,y_b^j;\theta)}{d\theta} \right\rangle=\frac{1}{d_1}\sum_{k=1}^{d_1}\left(W_2^k\sigma'(W_1^{1k}x_b^j+W_1^{2k}y_b^j+b_1^k)x_b^j\right)^2.\]
By the law of large numbers, this term converges in probability to
\[(x_b^j)^2\mathbb{E}[X_2^2\sigma'(S)^2]=(x_b^j)^2\mathbb{E}[X_2^2]\mathbb{E}[\sigma'(S)^2]=(x_b^j)^2\mathbb{E}[\sigma'(S)^2]=(x_b^j)^2 O(((x_b^j)^2+(y_b^j)^2+1)^2)\]
as $d_1$ goes to infinity,
where the first equality holds since $X_2$ and $S$ are independent and the third equality follows from Lemma~\ref{lem:gaussian}.

The second term of \eqref{eq:kernel-decomposition} is 
\[\left\langle\frac{du(x_b^j,y_b^j;\theta)}{dW_1^2},\frac{du(x_b^j,y_b^j;\theta)}{dW_1^2}\right\rangle=\frac{1}{d_1}\sum_{k=1}^{d_1}\left(W_2^k\sigma(W_1^{1k}x_b^j+W_1^{2k}y_b^j+b_1^k)y_b^j\right)^2.\]
By the law of large numbers and the similar arguments, this term converges in probability to
\[(y_b^j)^2\mathbb{E}[X_2^2\sigma'(S)^2]=(y_b^j)^2\mathbb{E}[X_2^2]\mathbb{E}[\sigma'(S)^2]=(y_b^j)^2\mathbb{E}[\sigma'(S)^2]=(y_b^j)^2 O(((x_b^j)^2+(y_b^j)^2+1)^2)\]
as $d_1$ goes to infinity.

The third term of \eqref{eq:kernel-decomposition} is
\[\left\langle\frac{du(x_b^j,y_b^j;\theta)}{dW_2},\frac{du(x_b^j,y_b^j;\theta)}{dW_2}\right\rangle=\frac{1}{d_1}\sum_{k=1}^{d_1}\left(\sigma(W_1^{1k}x_b^j+W_1^{2k}y_b^j+b_1^k)\right)^2.\]
By the law of large numbers and Lemma~\ref{lem:gaussian}, this term converges in probability to
\[\mathbb{E}[\sigma(S)^2]=O(((x_b^j)^2+(y_b^j)^2+1)^3)\]
as $d_1$ goes to infinity.

The fourth term of \eqref{eq:kernel-decomposition} is 
\[\left\langle\frac{du(x_b^j,y_b^j;\theta)}{db_1},\frac{du(x_b^j,y_b^j;\theta)}{db_1}\right\rangle=\frac{1}{d_1}\sum_{k=1}^{d_1}\left(W_2^k\sigma'(W_1^{1k}x_b^j+W_1^{2k}y_b^j+b_1^k)\right)^2.\]
By the law of large numbers and Lemma~\ref{lem:gaussian}, this term converges in probability to
\[\mathbb{E}[\sigma'(S)^2]=O(((x_b^j)^2+(y_b^j)^2+1)^2)\]
as $d_1$ goes to infinity.

Finally, the last term in~\eqref{eq:kernel-decomposition} is
\[\left\langle\frac{du(x_b^j,y_b^j;\theta)}{db_2},\frac{du(x_b^j,y_b^j;\theta)}{db_2}\right\rangle=1.\]

Combining them all together, we deduce that
\[\left\langle\frac{du(x_b^j,y_b^j;\theta)}{d\theta},\frac{du(x_b^j,y_b^j;\theta)}{d\theta}\right\rangle=O\left(((x_b^j)^2+(y_b^j)^2+1)^3\right).\]
Noting we sample $(x_b^j, y_b^j)$ uniformly from the scaled boundary \eqref{extended-boundary}, $x_b^j$ and $y_b^j$ are scaled with $N_x$ and $N_y$ respectively. Thus, we conclude that 
\begin{equation}\label{eq:1}
\frac{\operatorname{Tr}(K_{uu}(0))}{N_b}=\frac{1}{N_b}\sum_{j=1}^{N_b}\left\langle\frac{du(x_b^j,y_b^j;\theta)}{d\theta},\frac{du(x_b^j,y_b^j;\theta)}{d\theta}\right\rangle=O\left((N_x^2+N_y^2+1)^3\right).
\end{equation}

Next, we compute the limit of $\operatorname{Tr}(K_{rr}(0))/N_r$ as the width $d_1$ goes to infinity. From the definition,
\[\frac{\operatorname{Tr}(K_{rr}(0))}{N_r}=\frac{1}{N_r}\sum_{i=1}^{N_r}\left\langle \frac{d\mathcal{D}[u(x_r^i,y_r^i;\theta)]}{d\theta},\frac{d\mathcal{D}[u(x_r^i,y_r^i;\theta)]}{d\theta}\right\rangle\]
and each summand is decomposed as
\begin{equation}\label{eq:kernel-decomposition2}
\begin{split}
    &\left\langle\frac{d\mathcal{D}[u(x_r^i,y_r^i;\theta)]}{d\theta},\frac{d\mathcal{D}[u(x_r^i,y_r^i;\theta)]}{d\theta} \right\rangle\\
    &\quad= \left\langle\frac{d\mathcal{D}[u(x_r^i,y_r^i;\theta)]}{dW_1^1},\frac{d\mathcal{D}[u(x_r^i,y_r^i;\theta)]}{dW_1^1} \right\rangle
    +\left\langle\frac{d\mathcal{D}[u(x_r^i,y_r^i;\theta)]}{dW_1^2},\frac{d\mathcal{D}[u(x_r^i,y_r^i;\theta)]}{dW_1^2}\right\rangle \\
    &\quad +\left\langle\frac{d\mathcal{D}[u(x_r^i,y_r^i;\theta)]}{dW_2},\frac{d\mathcal{D}[u(x_r^i,y_r^i;\theta)]}{dW_2}\right\rangle
    +\left\langle\frac{d\mathcal{D}[u(x_r^i,y_r^i;\theta)]}{db_1},\frac{d\mathcal{D}[u(x_r^i,y_r^i;\theta)]}{db_1}\right\rangle\\
    &\quad+
    \left\langle\frac{d\mathcal{D}[u(x_r^i,y_r^i;\theta)]}{db_2},\frac{d\mathcal{D}[u(x_r^i,y_r^i;\theta)]}{db_2}\right\rangle.
\end{split}
\end{equation}
Recall that 
\[\mathcal{D}[u]=\beta\frac{xy}{N_y}\partial_x u+x(\partial_x u)^++(\gamma-\beta\frac{x}{N_x})y\partial_y u.
\]
Computing the first derivatives of $u$, we get
\begin{align*}
    \partial_x u(x_r^i, y_r^i;\theta)&=\frac{1}{\sqrt{d_1}}\sum_{k=1}^{d_1} W_2^k W_1^{1k}\sigma'(W_1^{1k}x_r^i+W_1^{2k}y_r^i+b_1^k)
\end{align*}
and
\begin{align*}
    \partial_y u(x_r^i, y_r^i;\theta)&=\frac{1}{\sqrt{d_1}}\sum_{k=1}^{d_1} W_2^k W_1^{2k}\sigma'(W_1^{1k}x_r^i+W_1^{2k}y_r^i+b_1^k).
\end{align*}
Thus, we get
\begin{align*}
    \mathcal{D}[u(x_r^i,y_r^i;\theta)]&=
    \beta\frac{x_r^i y_r^i}{N_y\sqrt{d_1}}\sum_{k=1}^{d_1}W_2^k W_1^{1k}\sigma'(W_1^{1k}x_r^i+W_1^{2k}y_r^i+b_1^k)\\
    &+\frac{x_r^i}{\sqrt{d_1}}\left(\sum_{k=1}^{d_1}W_2^k W_1^{1k}\sigma'(W_1^{1k}x_r^i+W_1^{2k}y_r^i+b_1^k)\right)^+\\
    &+(\gamma-\beta\frac{x_r^i}{N_x})\frac{y_r^i}{\sqrt{d_1}}\sum_{k=1}^{d_1}W_2^k W_1^{2k}\sigma'(W_1^{1k}x_r^i+W_1^{2k}y_r^i+b_1^k).
\end{align*}
The first term of \eqref{eq:kernel-decomposition2} is
\begin{align*}
    &\left\langle\frac{d\mathcal{D}[u(x_r^i,y_r^i;\theta)]}{dW_1^1},\frac{d\mathcal{D}[u(x_r^i,y_r^i;\theta)]}{dW_1^1} \right\rangle\\
    &=
    \frac{1}{d_1}\sum_{k=1}^{d_1}\bigg[\frac{\beta x_r^i y_r^i}{N_y}W_2^k \left(W_1^{1k}\sigma''(W_1^{1k}x_r^i+W_1^{2k}y_r^i+b_1^k)x_r^i+\sigma'(W_1^{1k}x_r^i+W_1^{2k}y_r^i+b_1^k)\right)\\
    &\quad+\delta x_r^i W_2^k\left(W_1^{1k}\sigma''(W_1^{1k}x_r^i+W_1^{2k}y_r^i+b_1^k)x_r^i+\sigma'(W_1^{1k}x_r^i+W_1^{2k}y_r^i+b_1^k)\right)\\
    &\quad+(\gamma-\beta\frac{x_r^i}{N_x})y_r^i W_2^k W_1^{1k}\sigma''(W_1^{1k}x_r^i+W_1^{2k}y_r^i+b_1^k)x_r^i\bigg]^2,
\end{align*}
where 
\[
\delta=\begin{cases}
    1 & \text{ if }\sum_{k=1}^{d_1}W_2^k W_1^{1k}\sigma'(W_1^{1k}x_r^i+W_1^{2k}y_r^i+b_1^k)\geq 0,\\
    0 & \text{ if }\sum_{k=1}^{d_1}W_2^k W_1^{1k}\sigma'(W_1^{1k}x_r^i+W_1^{2k}y_r^i+b_1^k)<0.
\end{cases}
\]
Temporarily denoting
\begin{align*}
    A_k &= \frac{\beta x_r^i y_r^i}{N_y}W_2^k W_1^{1k}\sigma''(W_1^{1k}x_r^i+W_1^{2k}y_r^i+b_1^k)x_r^i,\\
    B_k &= \frac{\beta x_r^i y_r^i}{N_y}W_2^k \sigma'(W_1^{1k}x_r^i+W_1^{2k}y_r^i+b_1^k),\\
    C_k &= x_r^i W_2^k W_1^{1k}\sigma''(W_1^{1k}x_r^i+W_1^{2k}y_r^i+b_1^k)x_r^i,\\
    D_k &= x_r^i W_2^k \sigma'(W_1^{1k}x_r^i+W_1^{2k}y_r^i+b_1^k),\\
    E_k &= (\gamma-\beta\frac{x_r^i}{N_x})y_r^i W_2^k W_1^{1k}\sigma''(W_1^{1k}x_r^i+W_1^{2k}y_r^i+b_1^k)x_r^i,
\end{align*}
and applying the power-mean inequality, we achieve
\[ \left\langle\frac{d\mathcal{D}[u(x_r^i,y_r^i;\theta)]}{dW_1^1},\frac{d\mathcal{D}[u(x_r^i,y_r^i;\theta)]}{dW_1^1} \right\rangle\leq \frac{1}{d_1}\sum_{k=1}^{d_1}5(A_k^2+B_k^2+C_k^2+D_k^2+E_k^2).\]
By the law of large numbers,
\begin{align*}
    \frac{1}{d_1}\sum_{k=1}^{d_1} A_k^2&\overset{\mathcal{P}}{\rightarrow} \frac{\beta^2(x_r^i)^4(y_r^i)^2}{N_y^2}\mathbb{E}[X_1^2\sigma''(S)^2],\\
    \frac{1}{d_1}\sum_{k=1}^{d_1} B_k^2&\overset{\mathcal{P}}{\rightarrow}
    \frac{\beta^2(x_r^i)^2(y_r^i)^2}{N_y^2}\mathbb{E}[\sigma'(S)^2],\\
    \frac{1}{d_1}\sum_{k=1}^{d_1} C_k^2&\overset{\mathcal{P}}{\rightarrow}
    (x_r^i)^2\mathbb{E}[X_1^2\sigma''(S)^2],\\
    \frac{1}{d_1}\sum_{k=1}^{d_1} D_k^2&\overset{\mathcal{P}}{\rightarrow}
    (x_r^i)^2\mathbb{E}[\sigma'(S)^2],\\
    \frac{1}{d_1}\sum_{k=1}^{d_1} E_k^2&\overset{\mathcal{P}}{\rightarrow}
    (\gamma-\beta\frac{x_r^i}{N_x})^2 (x_r^i)^2(y_r^i)^2\mathbb{E}[X_1^2\sigma''(S)^2].
\end{align*}
where $S=X_1x_r^i+Y_1 y_r^i+Z$ and $\overset{\mathcal{P}}{\rightarrow}$ denotes the convergence in probability. By Lemma~\ref{lem:gaussian}, we know that
$\mathbb{E}[\sigma'(S)^2]=O\left(((x_r^i)^2+(y_r^i)^2+1)^2\right).$ Using the fact 
\[|\sigma''(X_1 x_r^i +Y_1 y_r^i +Z)|^2 \leq 36 |X_1 x_r^i + Y_1 y_r^i + Z|^2,
\] 
we have
\begin{equation}\label{eq:computation}
\begin{split}
&\mathbb{E}\left[ X_1^2 \sigma''(S)^2 \right] \\
&\leq \frac{36}{(2\pi)^{3/2}} \int_{-\infty}^\infty \int_{-\infty}^\infty 
x^2(x^2(x_r^i)^2+y^2(y_r^i)^2+z^2+2xyx_r^i y_r^i+2yzy_r^i+2xzx_r^i)\exp\left(-\frac{x^2+y^2+z^2}{2}\right) \mathrm{d}z \mathrm{d}dy \mathrm{d}dx\\
&= 36 (|x_r^i|^2\mathbb{E}[X_1^4]+(y_r^i)^2\mathbb{E}[X_1^2 Y_1^2]+\mathbb{E}[X_1^2 Z^2])\\
&= 36 \left(3 (x_r^i)^2 + (y_r^i)^2 + 1\right).
\end{split}
\end{equation}
Since we sample $(x_r^i, y_r^i)$ from the scaled domain \eqref{domain-restricted}, $x_r^i$ and $y_r^i$ are scaled with $N_x$ and $N_y$ respectively, as we adjust $N_x, N_y$. Thus, we have that
\begin{equation}\label{eq:2-1}
\left\langle\frac{d\mathcal{D}[u(x_r^i,y_r^i;\theta)]}{dW_1^1},\frac{d\mathcal{D}[u(x_r^i,y_r^i;\theta)]}{dW_1^1} \right\rangle=
O \left((N_x^4+N_x^2+N_x^2 N_y^2)(3N_x^2+N_y^2+1)+N_x^2(N_x^2+N_y^2+1)^2\right),
\end{equation}
where we omitted other variables such as $\beta$ and $\gamma$ as our primary focus is on the dependency on $N_x$ and $N_y$.

To proceed, we now observe that the second term in~\eqref{eq:kernel-decomposition2} is written as
\begin{align*}
    &\left\langle\frac{d\mathcal{D}[u(x_r^i,y_r^i;\theta)]}{dW_1^2},\frac{d\mathcal{D}[u(x_r^i,y_r^i;\theta)]}{dW_1^2} \right\rangle\\
    &=
    \frac{1}{d_1}\sum_{k=1}^{d_1}\bigg[\frac{\beta x_r^i y_r^i}{N_y}W_2^k W_1^{1k}\sigma''(W_1^{1k}x_r^i+W_1^{2k}y_r^i+b_1^k)y_r^i\\
    &\quad+\delta x_r^i W_2^kW_1^{1k}\sigma''(W_1^{1k}x_r^i+W_1^{2k}y_r^i+b_1^k)y_r^i\\
    &\quad+(\gamma-\beta\frac{x_r^i}{N_x})y_r^i W_2^k\left( W_1^{1k}\sigma''(W_1^{1k}x_r^i+W_1^{2k}y_r^i+b_1^k)y_r^i+\sigma'(W_1^{1k}x_r^i+W_1^{2k}y_r^i+b_1^k)\right)\bigg]^2.
\end{align*}
Similarly we temporarily denote
\begin{align*}
    A_k &= \frac{\beta x_r^i y_r^i}{N_y}W_2^k W_1^{1k}\sigma''(W_1^{1k}x_r^i+W_1^{2k}y_r^i+b_1^k)y_r^i,\\
    B_k &= x_r^i W_2^k W_1^{1k}\sigma''(W_1^{1k}x_r^i+W_1^{2k}y_r^i+b_1^k)y_r^i,\\
    C_k &= (\gamma-\beta\frac{x_r^i}{N_x})y_r^i W_2^k W_1^{2k}\sigma''(W_1^{1k}x_r^i+W_1^{2k}y_r^i+b_1^k)y_r^i,\\
    D_k &= (\gamma-\beta\frac{x_r^i}{N_x})y_r^i W_2^k \sigma'(W_1^{1k}x_r^i+W_1^{2k}y_r^i+b_1^k)y_r^i,
\end{align*}
and apply the power mean inequality to deduce
\[ \left\langle\frac{d\mathcal{D}[u(x_r^i,y_r^i;\theta)]}{dW_1^2},\frac{d\mathcal{D}[u(x_r^i,y_r^i;\theta)]}{dW_1^2} \right\rangle\leq \frac{1}{d_1}\sum_{k=1}^{d_1}4(A_k^2+B_k^2+C_k^2+D_k^2).\]
By the law of large numbers,
\begin{align*}
    \frac{1}{d_1}\sum_{k=1}^{d_1} A_k^2&\overset{\mathcal{P}}{\rightarrow} \frac{\beta^2(x_r^i)^2(y_r^i)^4}{N_y^2}\mathbb{E}[X_1^2\sigma''(S)^2],\\
    \frac{1}{d_1}\sum_{k=1}^{d_1} B_k^2&\overset{\mathcal{P}}{\rightarrow}
    (x_r^i)^2(y_r^i)^2\mathbb{E}[X_1^2\sigma''(S)^2],\\
    \frac{1}{d_1}\sum_{k=1}^{d_1} C_k^2&\overset{\mathcal{P}}{\rightarrow}
    (\gamma-\beta\frac{x_r^i}{N_x})^2(y_r^i)^4\mathbb{E}[Y_1^2\sigma''(S)^2],\\
    \frac{1}{d_1}\sum_{k=1}^{d_1} D_k^2&\overset{\mathcal{P}}{\rightarrow}
    (\gamma-\beta\frac{x_r^i}{N_x})^2(y_r^i)^2\mathbb{E}[\sigma'(S)^2].
\end{align*}
Thus, by Lemma~\ref{lem:gaussian} and \eqref{eq:computation}, we get
\begin{equation}\label{eq:2-2}
\left\langle\frac{d\mathcal{D}[u(x_r^i,y_r^i;\theta)]}{dW_1^2},\frac{d\mathcal{D}[u(x_r^i,y_r^i;\theta)]}{dW_1^2} \right\rangle=
O \left((N_x^2 N_y^2+N_y^4)(3N_x^2+N_y^2+1)+N_y^2(N_x^2+N_y^2+1)^2\right).
\end{equation}

The third term of \eqref{eq:kernel-decomposition2} is
\begin{align*}
\left\langle\frac{d\mathcal{D}[u(x_r^i,y_r^i;\theta)]}{dW_2},\frac{d\mathcal{D}[u(x_r^i,y_r^i;\theta)]}{dW_2} \right\rangle &=
\frac{1}{d_1}\sum_{k=1}^{d_1}\bigg[\frac{\beta x_r^i y_r^i}{N_y}W_1^{1k}\sigma'(W_1^{1k}x_r^i+W_1^{2k}y_r^i+b_1^k)\\
&+\delta x_r^i W_1^{1k}\sigma'(W_1^{1k}x_r^i+W_1^{2k}y_r^i+b_1^k)\\
&+(\gamma-\beta\frac{x_r^i}{N_x})y_r^i W_1^{2k}\sigma'(W_1^{1k}x_r^i+W_1^{2k}y_r^i+b_1^k)\bigg]^2.
\end{align*}
Denoting
\begin{align*}
    A_k &= \frac{\beta x_r^i y_r^i}{N_y}W_1^{1k}\sigma'(W_1^{1k}x_r^i+W_1^{2k}y_r^i+b_1^k),\\
    B_k &= x_r^i W_1^{1k}\sigma'(W_1^{1k}x_r^i+W_1^{2k}y_r^i+b_1^k),\\
    C_k &= (\gamma-\beta\frac{x_r^i}{N_x})y_r^i W_1^{2k}\sigma'(W_1^{1k}x_r^i+W_1^{2k}y_r^i+b_1^k),
\end{align*}
we have 
\[\left\langle\frac{d\mathcal{D}[u(x_r^i,y_r^i;\theta)]}{dW_2},\frac{d\mathcal{D}[u(x_r^i,y_r^i;\theta)]}{dW_2} \right\rangle\leq \frac{1}{d_1}\sum_{k=1}^{d_1}3(A_k^2+B_k^2+C_k^2).\]
By the law of large numbers,
\begin{align*}
    \frac{1}{d_1}\sum_{k=1}^{d_1} A_k^2&\overset{\mathcal{P}}{\rightarrow} \frac{\beta^2(x_r^i)^2(y_r^i)^2}{N_y^2}\mathbb{E}[X_1^2\sigma'(S)^2],\\
    \frac{1}{d_1}\sum_{k=1}^{d_1} B_k^2&\overset{\mathcal{P}}{\rightarrow}
    (x_r^i)^2\mathbb{E}[X_1^2\sigma'(S)^2],\\
    \frac{1}{d_1}\sum_{k=1}^{d_1} C_k^2&\overset{\mathcal{P}}{\rightarrow}
    (\gamma-\beta\frac{x_r^i}{N_x})^2(y_r^i)^2\mathbb{E}[Y_1^2\sigma'(S)^2].
\end{align*}
By Lemma~\ref{lem:gaussian} and the fact that $\sigma'(x)^2\leq 9x^4$, we have
\begin{align*}
&\mathbb{E}[X_1^2\sigma'(S)^2]\\
&\leq\frac{9}{(2\pi)^{3/2}}\int_{-\infty}^\infty\int_{-\infty}^\infty\int_{-\infty}^\infty x^2(x x_r^i+yy_r^i+z)^4 \mathrm{d}z\mathrm{d}y\mathrm{d}x\\
&= \frac{9}{(2\pi)^{3/2}}\int_{-\infty}^\infty\int_{-\infty}^\infty\int_{-\infty}^\infty x^2(x^4 (x_r^i)^4+y^4(y_r^i)^4+z^4+6x^2y^2(x_r^i y_r^i)^2+6y^2z^2(y_r^i)^2+6x^2z^2(x_r^i)^2) \mathrm{d}z\mathrm{d}y\mathrm{d}x\\
&= 9(15(x_r^i)^4+3(y_r^i)^4+3+18(x_r^i y_r^i)^2+6(y_r^i)^2+18(x_r^i)^2)=O(5(x_r^i)^4+(y_r^i)^4+6(x_r^i y_r^i)^2).
\end{align*}
Similarly, 
\[\mathbb{E}[Y_1^2\sigma'(S)^2]\leq 9(15(y_r^i)^4+3(x_r^i)^4+3+18(x_r^i y_r^i)^2+6(x_r^i)^2+18(y_r^i)^2)=O((x_r^i)^4+5(y_r^i)^4+6(x_r^i y_r^i)^2).\]
Thus, we have that
\begin{equation}\label{eq:2-3}
\begin{split}
    \left\langle\frac{d\mathcal{D}[u(x_r^i,y_r^i;\theta)]}{dW_2},\frac{d\mathcal{D}[u(x_r^i,y_r^i;\theta)]}{dW_2} \right\rangle&=O\left(N_x^2(5N_x^4+N_y^4+6N_x^2 N_y^2)+N_y^2(N_x^4+5N_y^4+6N_x^2 N_y^2)\right)\\
    &=O(5N_x^6+7N_x^4N_y^2+7N_x^2 N_y^4+5N_y^6).
    \end{split}
\end{equation}

The fourth term of \eqref{eq:kernel-decomposition2} is
\begin{align*}
    \left\langle\frac{d\mathcal{D}[u(x_r^i,y_r^i;\theta)]}{db_1},\frac{d\mathcal{D}[u(x_r^i,y_r^i;\theta)]}{db_1} \right\rangle
    &=\frac{1}{d_1}\sum_{k=1}^{d_1}\bigg[\frac{\beta x_r^i y_r^i}{N_y}W_2^k W_1^{1k}\sigma''(W_1^{1k}x_r^i+W_1^{2k}y_r^i+b_1^k)\\
    &+\delta x_r^i W_2^k W_1^{1k}\sigma''(W_1^{1k}x_r^i+W_1^{2k}y_r^i+b_1^k)\\
    &+(\gamma-\beta\frac{x_r^i}{N_x})y_r^i W_2^k W_1^{2k}\sigma''(W_1^{1k}x_r^i+W_1^{2k}y_r^i+b_1^k)\bigg]^2.
\end{align*}
Denoting
\begin{align*}
    A_k &= \frac{\beta x_r^i y_r^i}{N_y}W_2^k W_1^{1k}\sigma''(W_1^{1k}x_r^i+W_1^{2k}y_r^i+b_1^k),\\
    B_k &= x_r^i W_2^k W_1^{1k}\sigma''(W_1^{1k}x_r^i+W_1^{2k}y_r^i+b_1^k),\\
    C_k &= (\gamma-\beta\frac{x_r^i}{N_x})y_r^i W_2^k W_1^{2k}\sigma''(W_1^{1k}x_r^i+W_1^{2k}y_r^i+b_1^k),
\end{align*}
we have 
\[\left\langle\frac{d\mathcal{D}[u(x_r^i,y_r^i;\theta)]}{db_1},\frac{d\mathcal{D}[u(x_r^i,y_r^i;\theta)]}{db_1} \right\rangle\leq \frac{1}{d_1}\sum_{k=1}^{d_1}3(A_k^2+B_k^2+C_k^2).\]
By the law of large numbers,
\begin{align*}
    \frac{1}{d_1}\sum_{k=1}^{d_1} A_k^2&\overset{\mathcal{P}}{\rightarrow} \frac{\beta^2(x_r^i)^2(y_r^i)^2}{N_y^2}\mathbb{E}[X_1^2\sigma''(S)^2],\\
    \frac{1}{d_1}\sum_{k=1}^{d_1} B_k^2&\overset{\mathcal{P}}{\rightarrow}
    (x_r^i)^2\mathbb{E}[X_1^2\sigma''(S)^2],\\
    \frac{1}{d_1}\sum_{k=1}^{d_1} C_k^2&\overset{\mathcal{P}}{\rightarrow}
    (\gamma-\beta\frac{x_r^i}{N_x})^2(y_r^i)^2\mathbb{E}[Y_1^2\sigma''(S)^2].
\end{align*}
By \eqref{eq:computation}, we have
\begin{equation}\label{eq:2-4}
\left\langle\frac{d\mathcal{D}[u(x_r^i,y_r^i;\theta)]}{db_1},\frac{d\mathcal{D}[u(x_r^i,y_r^i;\theta)]}{db_1} \right\rangle=O\left(N_x^2(3N_x^2+N_y^2+1)+N_y^2(N_x^2+3N_y^2+1)\right).
\end{equation}

Lastly, the fifth term of \eqref{eq:kernel-decomposition2} is simply
\begin{equation}\label{eq:2-5}
    \left\langle\frac{d\mathcal{D}[u(x_r^i,y_r^i;\theta)]}{db_2},\frac{d\mathcal{D}[u(x_r^i,y_r^i;\theta)]}{db_2} \right\rangle=0.
\end{equation}

Combining \eqref{eq:2-1}, \eqref{eq:2-2}, \eqref{eq:2-3}, \eqref{eq:2-4} and \eqref{eq:2-5}, we obtain that
\begin{equation}\label{eq:2}
\frac{\operatorname{Tr}(K_{rr}(0))}{N_r}=O\left(P(N_x, N_y)\right),
\end{equation}
where $P(x,y)$ is a degree $3$ homogeneous polynomial in $x^2$ and $y^2$ with positive coefficients.

By \eqref{eq:1} and \eqref{eq:2}, we finally conclude that the average convergence rate of NTK increases as the variable scaling factors $N_x$ and $N_y$ increase.
In the special case where we set $N_x=N_y=N$, then the right hand sides of both \eqref{eq:1} and \eqref{eq:2} become $O(N^6)$.
\end{proof}

In practice, we use gradient descent in training, which is not identical to the ideal gradient flow. Consequently, arbitrarily increasing $N_x$ and $N_y$ may lead to excessively large parameter updates during a single step of gradient descent, potentially destabilizing the training process. Therefore, it is crucial to empirically determine an appropriate value for $N_x$ and $N_y$ to ensure stable learning.

\section{Methodology}\label{sec:method}
Our goal is to approximate $u$ and $u^{r_0}$ using the PINN framework without discretizing the spatial domain and to use these results to estimate the optimal switching control via~\eqref{eq:switch} with high accuracy. 
To achieve this, we train \(u\) and \(u^{r_0}\) separately by solving~\eqref{eq:problem_main} within a fixed domain \(D := [0, \ell_x] \times [\mu, \mu + \ell_y]\), where \(\ell_x, \ell_y, \mu > 0\) are given. While the original domain is the unbounded, restriction to the bounded domain is a common practice in numerical analysis. We then identify the optimal control $r$ via~\eqref{eq:switch}, which minimizes the eradication time, by modeling $\tau$ as a neural network function. For the ease of training the switching time $\tau$, we introduce another neural network to approximate the uncontrolled dynamics $(S(t),I(t))$ satisfying~\eqref{eq:ode_uncontrolled}. 

Figure~\ref{fig:tau_problem_solving} outlines the full workflow, where we first train \(u\) and \(u^{r_0}\) using PINNs, then learn the uncontrolled system dynamics, and finally optimize \(\tau\) using the DPP framework.

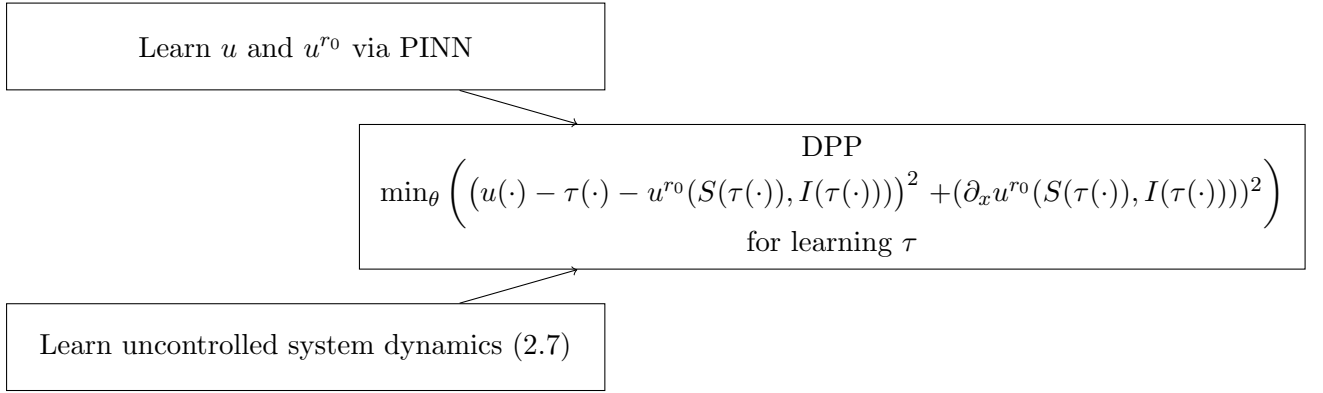
\begin{figure}[H]
    \centering
    \begin{tikzpicture}
      \tikzset{block/.style={rectangle, draw, text width=8em, text centered, minimum height=3em}}

      \node[block, text width=20em] (u) at (-3,4) {Learn $u$ and $u^{r_0}$ via PINN};
      
      \node[block, text width=20em] (trained) at (-3,0) {Learn uncontrolled system dynamics~\eqref{eq:ode_uncontrolled}};
      \node[block, text width=32em, minimum height=5em] (dpp) at (4,2) {DPP\\ $\min_{\theta} \bigg(\big(u(\cdot) - \tau(\cdot) - u^{r_0}(S(\tau(\cdot)), I(\tau(\cdot)))\big)^2$
      $ + (\partial_x u^{r_0}(S(\tau(\cdot)), I(\tau(\cdot))))^2\bigg)$\\ for learning $\tau$};
    
      \draw[->] (u) -- (dpp);
      \draw[->] (trained) -- (dpp);
    \end{tikzpicture}
    \caption{Procedure for solving \(\tau\) using the DPP framework, with trained \(u\), \(u^{r_0}\), and the learned uncontrolled dynamics \((S(t), I(t))\).}
    \label{fig:tau_problem_solving}
\end{figure}

\subsection{Solving HJB equations}
\label{sec:training_u}
Let us begin by solving 
\[
        \beta x y \partial_x u + x (\partial_x u)^+ +(\gamma - \beta x ) y \partial_y u=1\quad\text{in}\quad [0,\ell_x] \times [\mu,\mu+\ell_y],
\]
and 
\[
        \beta x y \partial_x u^{r_0} + x \partial_x u^{r_0} + (\gamma - \beta x ) y \partial_y \quad\text{in}\quad[0,\ell_x] \times [\mu,\mu+\ell_y],
\]
respectively via PINNs, or equivalently,
\[
        \mathcal{D}[u]=1\quad \text{and}\quad 
        \mathcal{D}^0[u^{r_0}]=1 \quad[0,\ell_x] \times [\mu,\mu+\ell_y],
\]
where 
\[
\mathcal{D} = \beta xy\partial_x + x(\partial_x)^+ +(\gamma-\beta x)y\partial_y \quad\text{and}\quad \mathcal{D}^0 = \beta xy\partial_x + x\partial_x +(\gamma-\beta x)y\partial_y.
\]

To leverage the idea of the VS-PINN, we set 
\[
u(x,y;\theta_1)=\text{NN}\bigg(N_x x+b_x,N_y y+b_y;\theta_1\bigg) \quad \text{and}\quad u^{r_0}(x,y;\theta_2)= \text{NN}\bigg(N_x x+b_x,N_y y+b_y;\theta_2\bigg),
\]
where $\text{NN}(\cdot;\theta_1)$ and $\text{NN}(\cdot;\theta_2)$ represent fully connected neural networks parametrized by $\theta_1$ and $\theta_2$ respectively. Note that $N_x, N_y, b_x, b_y$ are not trained during training. We then explicitly define the loss function.

\textbf{Residual loss}: With the prespecified training data $\{(x_r^i,y_r^i)\}_{i=1}^{N_r} \in [0,\ell_x] \times [\mu,\ell_y+\mu]$ for $N_r \in \mathbb{N}$,
the residual loss associated with $u$ is given as 
\[
\mathcal{L}_r[u(\cdot;\theta_1)] = \frac{1}{N_r}\sum_{i=1}^{N_r} (\mathcal{D}[u(x_r^i,y_r^i;\theta_1)]-1)^2.
\]

Similarly, the residual loss corresponding to $u^{r_0}$ is defined as
\[
\mathcal{L}^0_r[u^{r_0}(\cdot;\theta_2)] = \frac{1}{N_r}\sum_{i=1}^{N_r} (\mathcal{D}^0[u^{r_0}(x_r^i,y_r^i;\theta_2)]-1)^2.
\]

\textbf{Boundary loss}: We define boundary loss function on 
\begin{equation}
\label{def:bdry}
\Gamma:= \underbrace{[0,\ell_x] \times \{\mu\}}_{=:D_1} \cup \underbrace{\{0\} \times [\mu,\mu+\ell_y]}_{=:D_2} \cup \underbrace{[0,\ell_x] \times \{\mu+\ell_y\}}_{=:D_3} \subset \partial D,
\end{equation}
where $\partial D$ denotes the boundary of the domain $D$. Given $N_b^k \in \mathbb {N}$, we randomly sample $N_b^k$ points from $D_k$ and denote them by $\{(x_{b,k}^j,y_{b,k}^j)\}_{j=1}^{N_b^k}$ for $k=1,2,3$. With $N_b = N_b^1 + N_b^2 + N_b^3$, let
\begin{align}
\label{eq:bdry_loss}
    \mathcal{L}_b[u(\cdot;\theta)] &= \frac{1}{N_b}\sum_{k=1}^3\sum_{j=1}^{N_b^k}\left(u(x_{b,k}^j,y_{b,k}^j;\theta)-u(x_{b,k}^j,y_{b,k}^j)\right)^2.
\end{align}
We note that boundary loss for learning $u^{r_0}$ is identical to that used for learning $u$.

One key challenge is the approximation of \(u(x_{b,k}^j, y_{b,k}^j)\) at the boundary points for each \(j\) and \(k\). To address this, we revisit an intrinsic property of optimal control for the minimum eradication time, demonstrating that the optimal vaccination strategy takes the form of~\eqref{eq:bangbang}. Accordingly, we propose Algorithm~\ref{alg:approx_era} for numerical implementation. Leveraging the structure of the optimal control, we compute the first time when \(I\) falls below \(\mu\) by using controls \(r_{s d \tau}\) for \(s \in \mathbb{N} \cup \{0\}\) and \(d \tau > 0\). The solution to the controlled SIR model~\eqref{eq:controlled_raw} is approximated using the fourth-order Runge-Kutta method with a stepsize \(dt>0\).

\begin{algorithm}[h]
\caption{Find the minimum eradication time}
\label{alg:approx_era}
\begin{algorithmic}[1]
\State {\bf Input:} Time resolution of optimal control $d\tau$, maximum switching time $L$, the discretization size $dt$, maximum iteration step $M$, eradication threshold $\mu$, initial susceptible and infectious population $x,y\in D$.

\State {\bf Output:} $\tau>0$ such that $r_\tau$ is an optimal control.
\State Set $(S_0,I_0)=(x,y)$ and $\tau=0$.
\While{True}
\For{$m=0,1,2,\ldots,M$} 
\State Compute \( S_{m+1}, I_{m+1} \) using the Runge-Kutta method corresponding to
\begin{equation*}
\begin{cases}
\dot S&=-\beta S I - r_\tau S,\\
\dot I&= \beta S I - \gamma I.
\end{cases}
\end{equation*}
\EndFor
\State $m_\tau:=\min \{m' : I_{m'} \leq \mu\}$.
\State $\tau = \tau + d\tau$.
\State Break if $\tau>L$. 
\EndWhile
\State Return $u(x,y) = \min_\tau \{m_\tau\}$.
\end{algorithmic}
\end{algorithm}

\subsection{Optimal switching control}
With $u$ and $u^{r_0}$ computed in the previous section, we now solve for the optimal switching time $\tau$ satisfying~\eqref{eq:switch},
\begin{equation*}
u(x,y)=\min_{\tau \geq 0} \{\tau + u^{r_0}(S(\tau),I(\tau))\},
\end{equation*}
which involves a complex and nonlinear optimization process. To avoid solving this optimization problem directly, we introduce a neural network $\tau(x,y;\theta)$, and propose optimizing the following problem: 
\begin{equation*}
\min_{\theta} \bigg(u(\cdot) -  \tau(\cdot;\theta) - u^{r_0}(S( \tau(\cdot;\theta)),I( \tau(\cdot;\theta)))\bigg)^2,    
\end{equation*}
where $S$ and $I$ satisfy the uncontrolled system of ODE~\eqref{eq:ode_uncontrolled}, that is,
\begin{equation*}
\begin{cases}
\dot S &= -\beta S I,\\
\dot I &= \beta S I - \gamma I,
\end{cases}
\end{equation*}
with $(S(0),I(0))=(x,y)$. Since the closed-form solutions of $S$ and $I$ are unavailable, we reduce the above optimization problem into
\begin{equation*}
\min_{\theta} \bigg((u(\cdot) -  \tau(\cdot;\theta) - u^{r_0}( S( \tau(\cdot;\theta);\omega), I( \tau(\cdot;\theta);\omega)))^2\bigg),    
\end{equation*}
where we approximate $(S(t),I(t))$ using a neural network $w(x,y,t;\omega)$, that is,
\[
(S(t),I(t)) \approx w(x,y,t;\omega).
\]

This approximation corresponds to training a neural network $w(x,y,t;\omega)$ with $w(x,y,0;\omega)=(x,y)$ that best mimics the uncontrolled dynamics $(S(t),I(t))$ satisfying~\eqref{eq:ode_uncontrolled} with $(S(0),I(0))=(x,y)$.

\subsubsection{Learning the uncontrolled SIR model}
\label{sub:learning_dy}
\begin{figure}[ht]
    \centering
    \includegraphics[width=\linewidth]{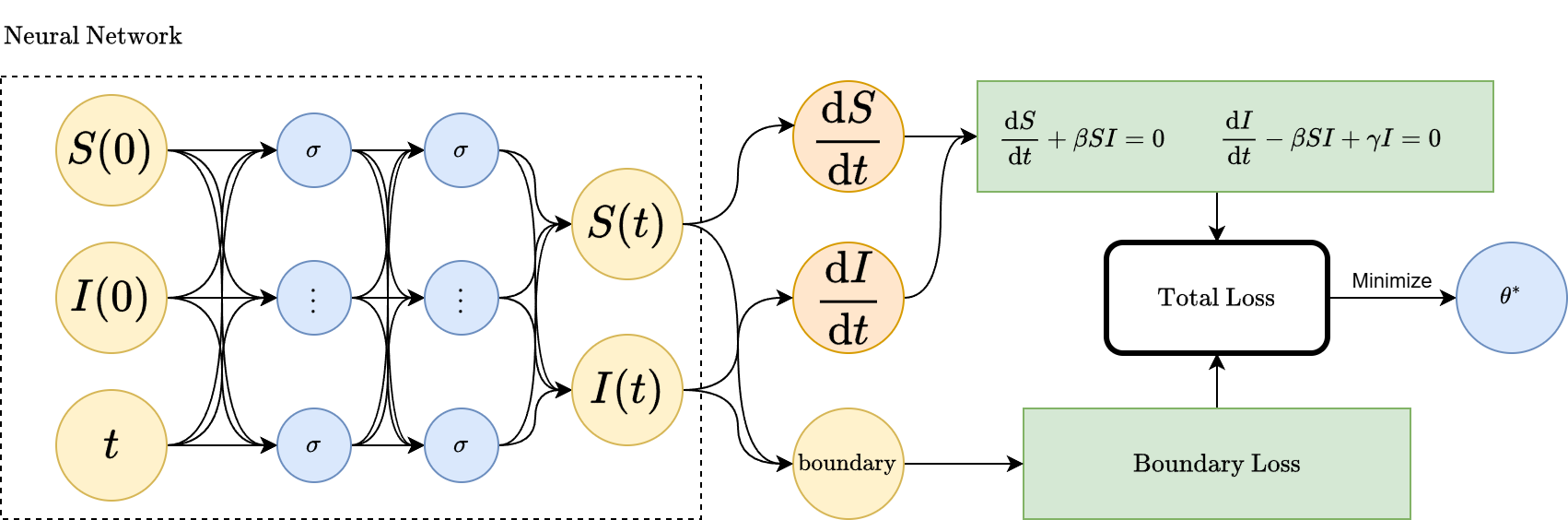}
    \caption{Training of uncontrolled dynamics via the PINN framework}
    \label{fig:si_pinn}
\end{figure}

\begin{algorithm}[h]
\caption{Training of the dynamics}
\label{alg:uncontolled_dynamics}
\begin{algorithmic}[1]
\State {\bf Input:} Number of training samples $N_{p}$, $N_{\text{int}}$, $N_{\text{bdry}}>0$, the number of temporal discretization points $N_T$, maximum evolution time $T$, the discretization step $dt=T/N_T>0$.
\State {\bf Output:} $w(x,y,t;\omega) \approx (S(t),I(t))$ solving ~\eqref{eq:ode_uncontrolled} with $(S(0),I(0))=(x,y)$.
\State Sample $\{x_{p}^i,y_{p}^i\}_{i=1}^{N_{p}} \subset D$.
\State Sample $
\{(x_{\text{bdry}}^k,y_{\text{bdry}}^k,t_k)\}_{k=1}^{N_{\text{bdry}}} \in \partial D \times \mathcal{T}$.
\While{Training not converged}
\State Sample spatial collocation points $\{x_{\text{int}}^j,y_{\text{int}}^j\}_{j=1}^{N_{\text{int}}} \subset D \times [0,T]$.
\State Set $(S_0,I_0)=(x,y)$.
\For{$m=0,1,2,\ldots,N_T-1$} 
\State 
Compute \( S_{m+1}, I_{m+1} \) using the Runge-Kutta method corresponding to
\begin{equation*}
\begin{cases}
\dot S &=-\beta S I ,\\
\dot I &= \beta S I - \gamma I.
\end{cases}
\end{equation*}
\EndFor
\State Compute the loss function $\mathcal{L}[w(\cdot;\omega)]$ defined as~\eqref{eq:loss_dy}.
\State $\omega \leftarrow \text{Adam}(\mathcal{L})$.
\EndWhile
\end{algorithmic}
\end{algorithm}

We propose to train a neural network $w(x,y,t;\theta) : (x,y,t) \mapsto \mathbb{R}^2$ that approximates the flow of uncontrolled dynamics $(S(t),I(t))$ with $(S(0),I(0))=(x,y)$ for all points $(x,y) \in D$. To train such a $w$, we first randomly choose interior points from the domain $D=[0,\ell_x] \times [\mu,\mu+\ell_y]$. For the stability of the training, we further sample points from the boundary of $D$ given by
\begin{equation*}
\label{eq:w_boundary}
\partial D= [0,\ell_x] \times \{\mu\} \cup \{0\} \times [\mu,\mu+\ell_y] \cup [0,\ell_x] \times \{\mu+\ell_y\} \cup \{\ell_x\} \times [\mu,\mu+\ell_y].
\end{equation*}
Let $T$, $N_T$, $N_p$, $N_{\text{int}}$, and $N_{\text{bdry}}$ be given and set $dt=T/N_T$ and $\mathcal{T}:=\{mdt:m=0,1,2,...,N_T\}$. With random samples $\{(x_{p}^i,y_{p}^i)\}_{i=1}^{N_{p}} \subset D$, $\{(x_{\text{int}}^j,y_{\text{int}}^j,t_j)\}_{j=1}^{N_{\text{int}}} \subset D \times [0,T]$, $\{(x_{\text{bdry}}^k,y_{\text{bdry}}^k,t_k)\}_{k=1}^{N_{\text{bdry}}} \subset \partial D \times \mathcal{T}$, and $N_{\text{tot}} = N_p + N_{\text{int}}+N_{\text{bdry}}$,
let us define the loss function as
\begin{equation}
\label{eq:loss_dy}
\mathcal{L}[w(\cdot;\omega)]:= \frac{1}{N_{\text{tot}}}\bigg( \sum_{i=1}^{N_p} \underbrace{\mathcal{L}_0 [w(x_p^i,y_p^i,0;\omega)]}_{\text{initial loss}} +\sum_{j=1}^{N_{\text{int}}}   \underbrace{\mathcal{L}_{p}[w(x_{\text{int}}^j,y_{\text{int}}^j,t_j;\omega)]}_{\text{ODE system loss}}+\sum_{k=1}^{N_{\text{bdry}}} \underbrace{\mathcal{L}_{\text{bdry}}[w(x_{\text{bdry}}^k,y_{\text{bdry}}^k,t_k;\omega)]}_{\text{boundary loss}}\bigg)
\end{equation}
for 
\[
\mathcal{L}_0[w(x,y,0;\omega)] := (w(x,y,0)[1]-x)^2 + (w(x,y,0;\omega)[2]-y)^2,
\]
and
\[
\mathcal{L}_{p}[w(\cdot;\omega)]:=  \bigg(\frac{\partial w(\cdot;\omega)[1]}{ \partial t} + \beta w(\cdot;\omega)[2] w(\cdot;\omega)[1] \bigg)^2 + \bigg(\frac{\partial w(\cdot;\omega)[2]}{\partial t} - \beta w(\cdot;\omega)[1] w(\cdot)[2] +\gamma w(\cdot;\omega)[2]\bigg)^2,
\]
where $w(\cdot)[1]$ and $w(\cdot)[2]$ denote the first and second component of $w$. 

Lastly, $\mathcal{L}_{\text{bdry}}$ is defined via
\[
\mathcal{L}_{\text{bdry}}[w(x,y,kdt;\omega)]:=  \|w(x,y,kdt;\omega)-g(x,y,kdt)\|_2^2,
\]
where the reference vector $g(x,y,kdt)$ is computed as follows:
\[
g(x,y,kdt)= (S_k, I_k),
\]
where the reference vector $g(x,y,kdt)$ is obtained by numerically solving the uncontrolled dynamics~\eqref{eq:ode_uncontrolled} using the fourth-order Runge-Kutta method, with initial conditions $(S_0, I_0) = (x,y)$ and a stepsize $dt>0$. The training details are provided in Algorithm~\ref{alg:uncontolled_dynamics}.

\subsubsection{Learning the switching time $\tau$}
Given $u$, $u^{r_0}$, and a neural network $w(x,y,t)$ that approximates the uncontrolled SIR dynamics satisfying~\eqref{eq:ode_uncontrolled} with $(S(0),I(0))=(x,y)$, we minimize the following loss function:
\begin{equation}
\label{eq:tau_dp_loss}
\mathcal{L}[\tau(\cdot;\theta)]:=\frac{1}{N}\sum_{i=1}^N \bigg\{\bigg(u(x_i,y_i) -  \tau(x_i,y_i;\theta) - u^{r_0}(S(\tau(x_i,y_i;\theta)),I(\tau(x_i,y_i;\theta)))\bigg)^2+\text{pen}(x_i,y_i,\tau(\cdot;\theta))\bigg\},
\end{equation}
over $\theta$ for randomly sampled points $\{(x^i,y^i)\}_{i=1}^N \in D$ where

\begin{equation*}
\text{pen}(x,y,\tau)=
\begin{cases}
\bigg(\partial_x u^{r_0}(w(x,y,\tau)[1], w(x,y,\tau)[2])\bigg)^2 \quad&\text{if}\quad (x,y) \in \mathcal{S}^C, \\
0 \quad &\text{otherwise}.
\end{cases}
\end{equation*}
Here, $w(x,y,\tau)$ approximates the solution $(S(\tau),I(\tau))$ to uncontrolled SIR with $(S(0),I(0))=(x,y)$ and is learned from the previous section. The penalty term ensures that $\tau$ satisfies~\eqref{eq:u0_prop}.

\section{Experimental results}
\subsection{Parameters and architecture}
In all experiments presented in this paper, the parameters of the controlled SIR model are set as follows: \( \beta = 2 \), \( \gamma = 10 \), and \( \mu = 0.01 \). We optimize using the Adam optimizer~\cite{kingma2014adam} with a fixed learning rate of 0.0001.

\subsubsection{Learning \( u \), \( u^{r_0} \)}
\begin{figure}[ht]
    \centering
    \includegraphics[width=0.6\linewidth]{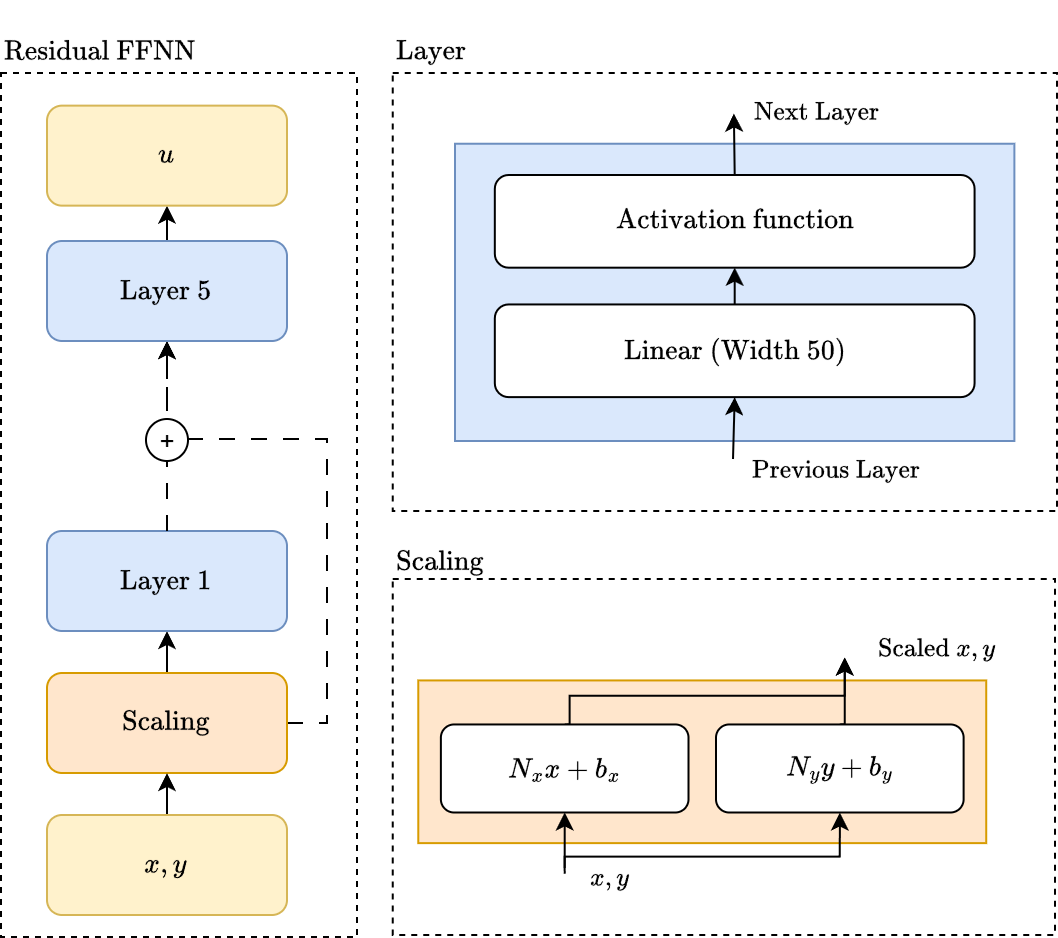}
    \caption{Feed-forward network with residual connections and normalization scheme.}
    \label{fig:ffnn}
\end{figure}
We employ feed-forward neural networks with residual connections, where the width is fixed as 50 and the depth consists of 5 hidden layers unless otherwise stated, as illustrated in Figure~\ref{fig:ffnn}. Regarding activation functions, we use the hyperbolic tangent function.

To test the efficiency of our method for approximating $u$ and $u^{r_0}$, we consider the domain $[0,20]\times[0.01,1]$ by setting $\ell_x=20$ and $\ell_y + \mu=1$, while varying the scaling factors $N_x$ and $N_y$ in Section~\ref{sec:training_u}. During training, we sample 1000 interior collocation points at each iteration. 

To define the boundary loss~\eqref{eq:bdry_loss}, we randomly select 100 data points from each boundary $D_i$ defined in~\eqref{def:bdry} for $i=1,2,3$, resulting in a total of 300 points, as illustrated in Figure~\ref{fig:collocations}. For the reference values obtained by Algorithm~\ref{alg:approx_era}, we set $dt=0.001$ and $d\tau=0.001$.

\subsubsection{Learning the uncontrolled dynamics}
To learn the uncontrolled dynamics, we use the same neural network architecture and activation functions as described above but consider a slightly larger spatio-temporal sampling domain given by $[0,20] \times [0.01,2] \times [0,2.5]$, setting $\ell_x=20$, $\ell_y+\mu=2$, and $T=2.5$ in Section~\ref{sub:learning_dy}. The value of $T$ is determined heuristically, based on the observation that the susceptible and infectious populations approach zero after a sufficiently long duration, which we set to $T=2.5$.

To define the boundary loss, we set $N_T=250$, and hence, $dt=0.01$ in Algorithm~\ref{alg:uncontolled_dynamics} and sample 4000 points from $\partial D \times \mathcal{T}$ by setting $N_{\text{bdry}}=4000$ in~\eqref{eq:loss_dy}. 

For the initial loss, we sample 1000 points per batch from $D$ by setting $N_p=1000$, shown as green dots in the figure. Additionally, to define the ODE system loss in~\eqref{eq:loss_dy}, 1000 collocation points \((S(0), I(0), t)\) are sampled per batch uniformly by setting $N_{\text{int}}=1000$.

\subsubsection{Learning the optimal switching time \(\tau \)}
We use a neural network without residual connections, with a width of 200 and 5 hidden layers, where leaky ReLU functions are applied in all hidden layers, and the Softplus function is employed in the final layer.

The domain of interest is given by $[0,20] \times [0.01,1]$. Recalling the loss function~\eqref{eq:tau_dp_loss}, we note that the values of $u$ are required in the same domain, while a larger domain must be considered for $u^{r_0}$ and $w$ since $I(\tau(x,y;\theta))$ may exceed 1. Therefore, we approximate $u^{r_0}$ in the domain $[0,20]\times[0.01,10]$, using variable scaling factors of $N_x=1$ and $N_y=4$.

Similar to the previous experiments, we randomly sample 1,000 collocation points per batch uniformly, as shown in Figure~\ref{fig:collocations} (right), and train using the DPP loss function~\eqref{eq:tau_dp_loss}.

\begin{figure}[ht]
    \centering
    \includegraphics[width=0.9\linewidth]{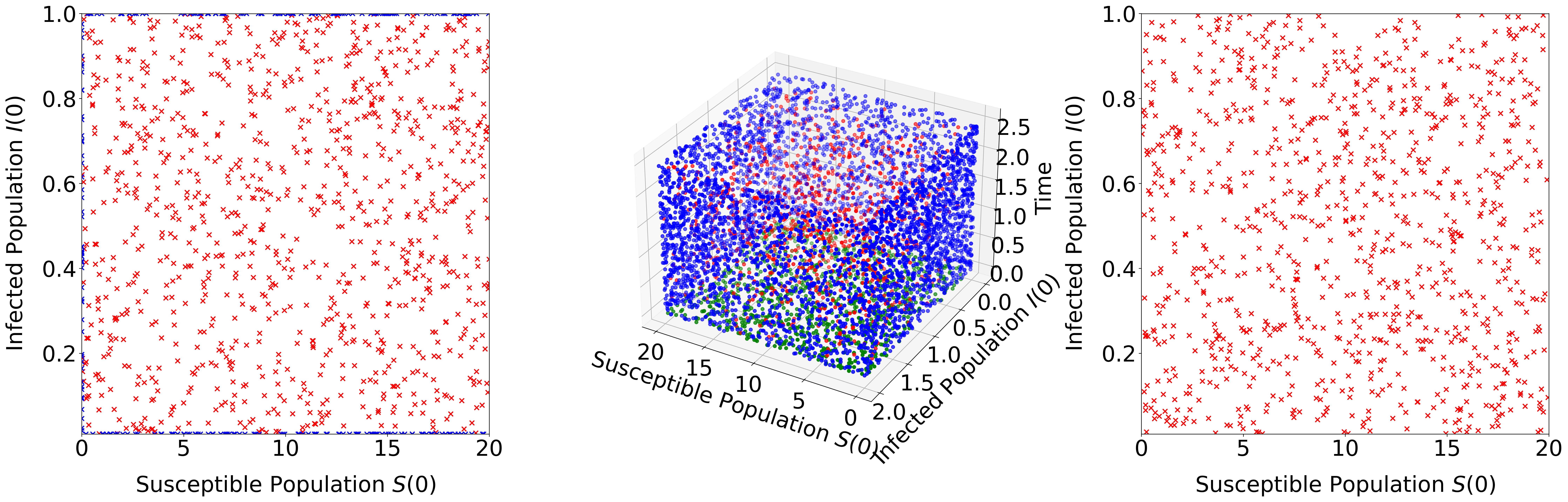}
    \caption{Collocation points and data points used in training \(u\), \(u^{r_0}\), \(w\) and \(\tau\). For training \(u\) and \(u^{r_0}\), collocation points (red dots) and data points (blue dots) are used (left). For training \(w\), collocation points (red dots), data points (blue dots), and initial condition (green area) are used as shown in the middle. For training \(\tau\), only collocation points are used (right).}
    \label{fig:collocations}
\end{figure}

\subsection{Result of learning $u$ and $u^{r_0}$ via PINN}
\begin{table}[ht]
\begin{center}
\caption{Evaluation of mean square error (MSE) of variable scaling}
\begin{tabular}{c|c|c|c} 
    \hline
    $N_x$ & $N_y$ & MSE of \(u\) & MSE of \(u^{r_0}\) \\
    \hline
     1. & 1. & \(2.604\times10^{-3}\) & \(4.666\times10^{-4}\) \\
     0.05 & 1. & \(2.876\times10^{-3}\)  & \(4.176\times10^{-4}\) \\
     1. & 10. & \(3.264\times10^{-4}\) & \(2.670\times10^{-5}\)\\
     2. & 20. & \(2.901\times10^{-4}\)  & \(2.424\times10^{-5}\)\\
    \hline
\end{tabular}
\label{tb:eval_vs}
\end{center}
\end{table}

\begin{table}[ht]
\begin{center}
\caption{Evaluation of mean square error (MSE) of different depths in the domain $[0,20] \times [0.01,10]$ with $N_x=2$ and $N_y=20$.}
\begin{tabular}{c|c|c} \hline
    Number of hidden layers & MSE of \(u\) & MSE of \(u^{r_0} \) \\

    \hline
    5 & \(2.291\times10^{-4}\) & \(1.982\times10^{-4}\)  \\
    1 & \(5.877\times10^{-3}\) & \(4.650\times10^{-4}\)  \\
    \hline

\end{tabular}
\label{tb:eval_u_depth}
\end{center}
\end{table}
We use the minimum eradication time \(u\) obtained from Algorithm~\ref{alg:approx_era} as a reference and compare it with the approximate minimum eradication time computed by our PINN algorithm. Similarly, we compute the eradication time under the control \(u^{r_0}\) using the Runge-Kutta method with a timestep of \(dt=0.001\). The mean square error (MSE) is then evaluated over the entire set of reference points obtained from Algorithm~\ref{alg:approx_era} across the domain. 

Table~\ref{tb:eval_vs} summarizes the MSE values corresponding to different choices of the scaling factors \(N_x\) and \(N_y\). As observed, setting \(N_x=2\) and \(N_y=20\) results in the lowest MSE, which aligns with the error analysis presented in Section~\ref{sec:ntk}. 

Additionally, we investigate the effect of network depth in the domain \( [0,20] \times [0.01,10] \). Table~\ref{tb:eval_u_depth} presents the results for different numbers of hidden layers. Training with a model containing 5 hidden layers yields a significantly lower MSE compared to training with a single hidden layer, demonstrating that deeper networks improve the approximation accuracy. The level set for values of $u$ and $u^{r_0}$ is presented in Figure~\ref{fig:layers}.

\begin{figure}[ht]
    \centering
    \begin{subfigure}[b]{1.0\textwidth}
        \centering
        \includegraphics[width=\textwidth, trim=0 0 0 0.05\textheight, clip]{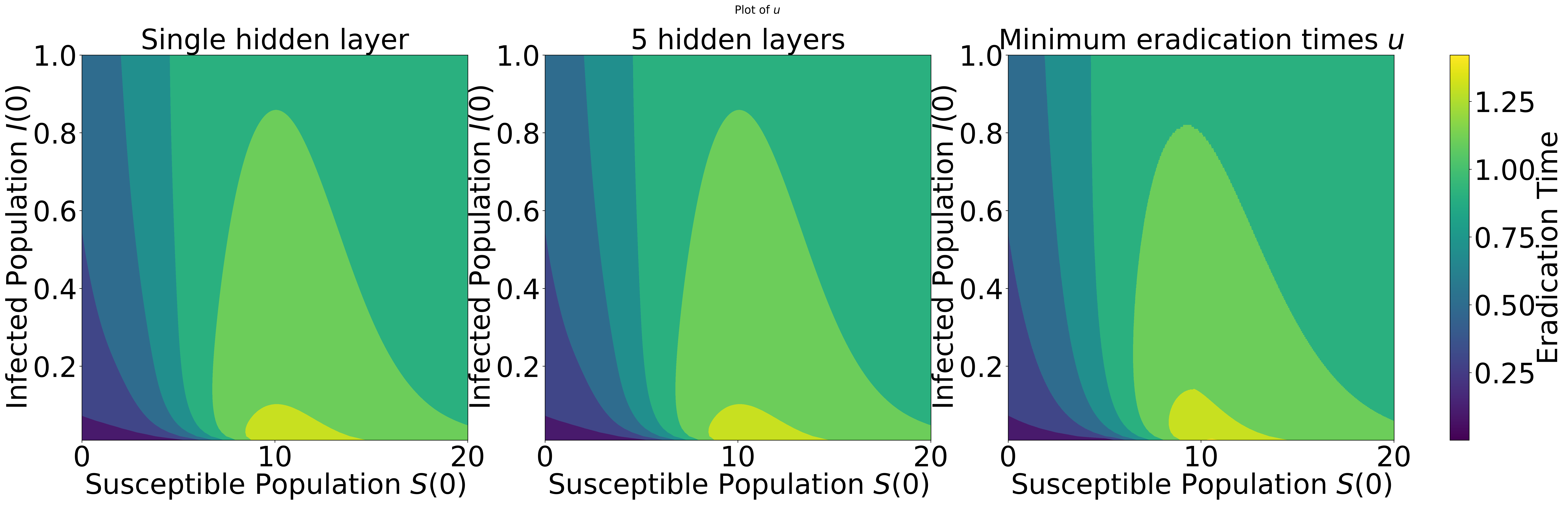}

        \caption{Values of \(u\) learned via PINN with a different number of hidden layers and \(N_x=2, N_y=20\): a single hidden layer (left), 5 hidden layers (middle), and the reference values of \(u\) computed by Algorithm~\ref{alg:approx_era} (right).}
        \label{fig:u_layers}
    \end{subfigure}
    \begin{subfigure}[b]{1.0\textwidth}
        \centering
        \includegraphics[width=\textwidth, trim=0 0 0 0.05\textheight, clip]{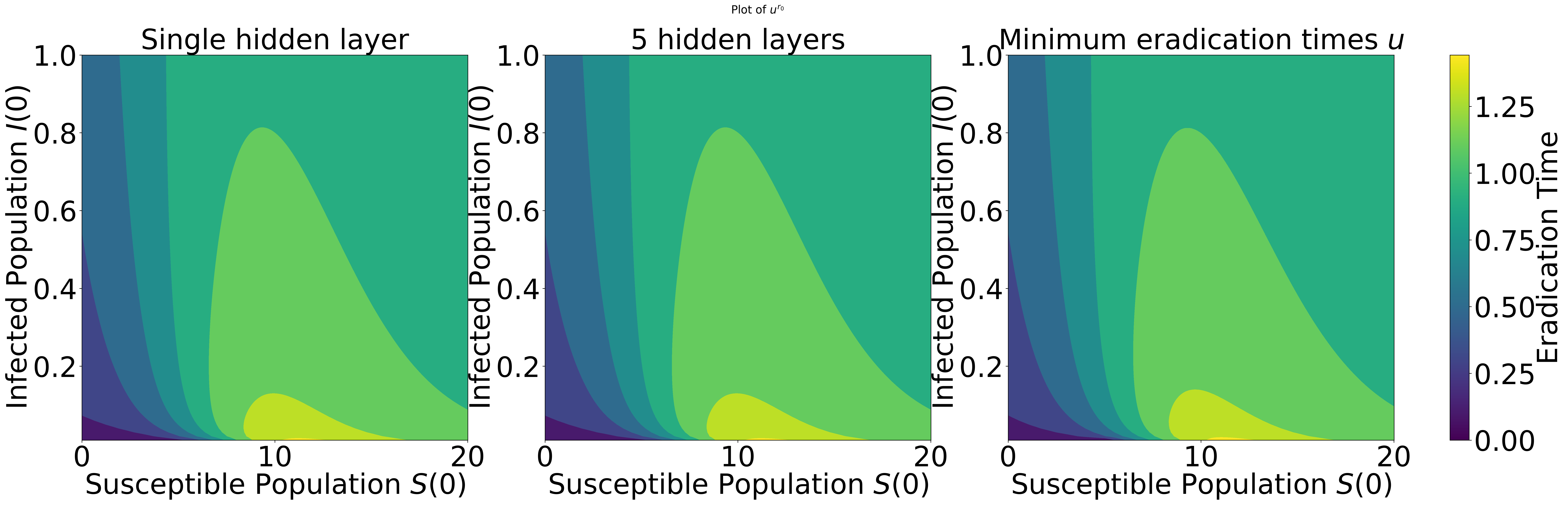}
        \caption{Values of trained \(u^{r_0}\) learned via PINN with a different number of hidden layers and \(N_x=2, N_y=20\): a single hidden layer (left), 5 hidden layers (middle), and reference values of \(u^{r_0}\) computed by Algorithm~\ref{alg:approx_era} (right).}
        \label{fig:u_r0_layers}
    \end{subfigure}
    \caption{\(u\) and \(u^{r_0}\) trained with different hidden layers size}
    \label{fig:layers}
\end{figure}

\subsection{Result of learning uncontrolled SIR dynamics}

To evaluate the trained uncontrolled SIR dynamics \(w\), we use the Runge-Kutta method. For reference data, we first sample \(\{(x_i,y_i)\}_{i=1}^{1000} \subset [0,20] \times [0.01,1]\) and approximate \((S(t_k), I(t_k))\), which satisfy the uncontrolled dynamics~\eqref{eq:ode_uncontrolled} with initial conditions \((S(0), I(0)) = (x_i, y_i)\) for all \(i=1,\dots,1000\) and \(t_k \in \{0.025 k \mid 0 \leq k \leq 100 \}\), using the Runge-Kutta method.

The mean square error (MSE) of \(w\) in estimating the population dynamics is \(7.557\times10^{-3}\), indicating that the neural network effectively estimates the uncontrolled dynamics. Figure~\ref{fig:si_pred_compare} illustrates the comparison between trajectories approximated by the trained neural network and those obtained via the Runge-Kutta method.

\begin{figure}[ht]
    \centering
    \includegraphics[width=0.9\textwidth]{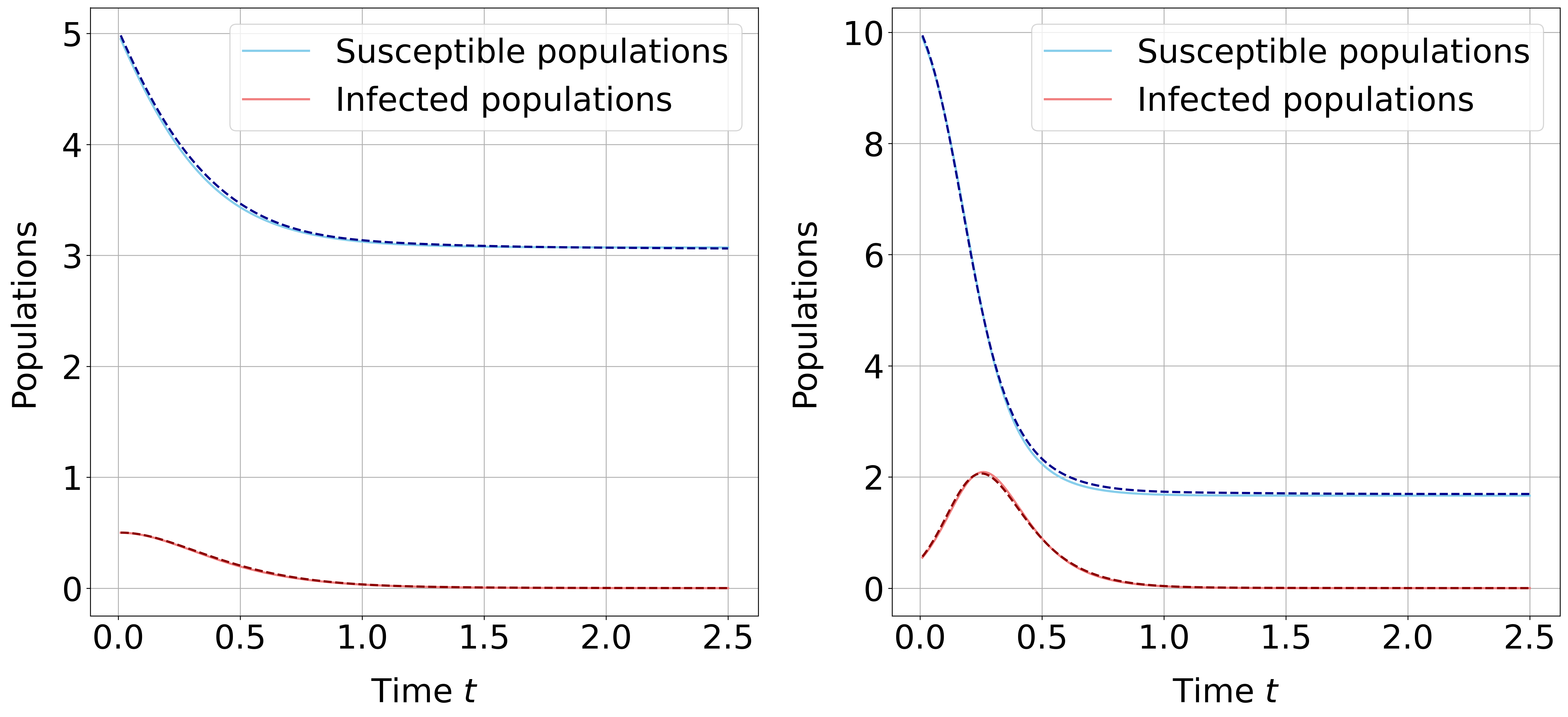}
    \caption{Comparison between \(w(S(0), I(0), t; \theta)\) and the ground truth computed via the Runge-Kutta method for selected initial conditions \((S(0), I(0))\) that are not part of the training set or boundary conditions. The left  shows the trajectories for \(S(0) = 5.0, I(0) = 0.5\), while the right panel corresponds to \(S(0) = 10.0, I(0) = 0.5\). The dashed lines represent the results obtained using the Runge-Kutta method, whereas the solid lines indicate the estimations from the trained neural network.
 }
    \label{fig:si_pred_compare}
\end{figure}

\subsection{Result of learning optimal switching time}
Recalling~\eqref{eq:u_prop} and~\eqref{eq:u0_prop}, the region where the optimal switching time is nonzero is characterized by the inequality \(\partial_x u(x,y) \leq 0\), as demonstrated in Figure~\ref{fig:tau_s}.

\begin{figure}[H]
    \centering
    \includegraphics[width=\linewidth]{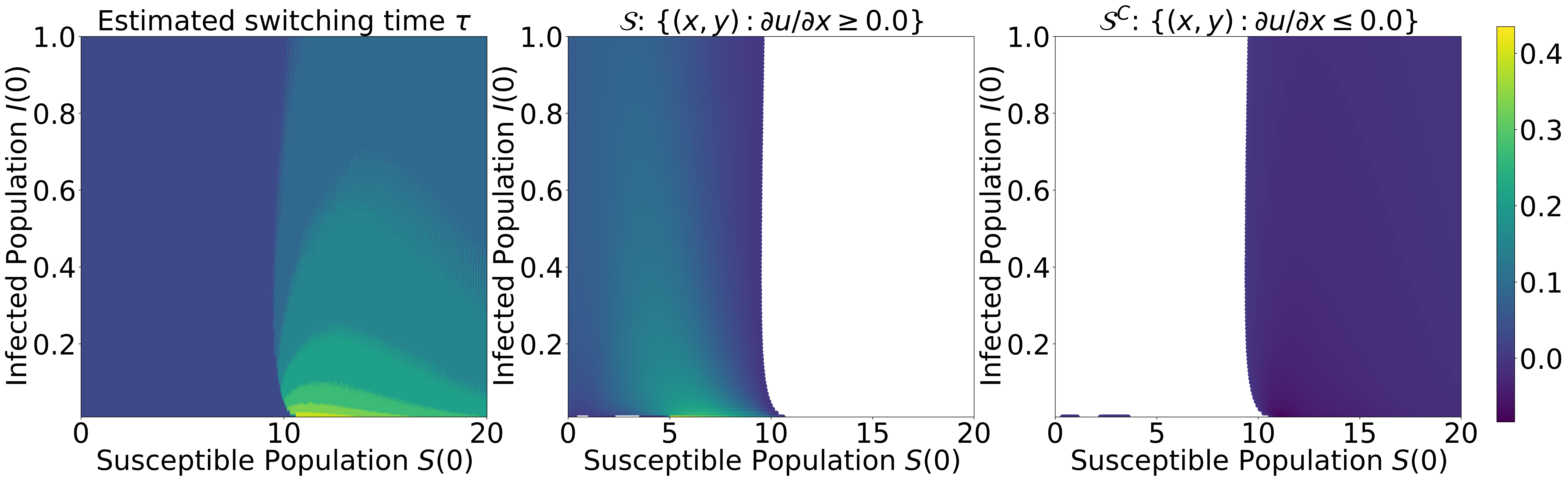}
\caption{Heatmap of regions \( S \) and its complement.
    The figure illustrates the ground truth of the optimal switching time \( \tau \) (left), the region \( S = \{(x,y) : \partial_x u(x,y) \geq 0\} \) (middle), and the complementary region \( S^C = \{(x,y) : \partial_x u(x,y) \leq 0\} \) (right).}
    \label{fig:tau_s}
\end{figure}

To evaluate the accuracy of the estimated switching time, we compute the mean square error (MSE) of the estimated switching time \(\tau\) using samples \(\{(x_i,y_i)\}_{i=1}^{1000} \subset \mathcal{S}^C\).

The optimal switching time for initial conditions \((S(0),I(0)) : \{(0.01i,0.01j) \mid 0 \leq i \leq 2000, 0 \leq j \leq 100\}\) is approximated using reference values obtained from Algorithm~\ref{alg:approx_era}, with a time discretization of $dt=0.001$, and $d\tau=0.001$.

Furthermore, we assess the performance of our model using different numbers of hidden layers. When using a single hidden layer, the model achieves an MSE of \(6.662\times10^{-4}\). However, increasing the depth to five hidden layers results in a slightly higher MSE of \(6.667\times10^{-4}\), indicating that additional depth does not necessarily improve accuracy in this setting.

Figure~\ref{fig:tau_compare} confirms that our trained model achieves accurate estimation of \(\tau\) with minimal error.

\begin{figure}[ht]
    \centering
    \includegraphics[width=\linewidth]{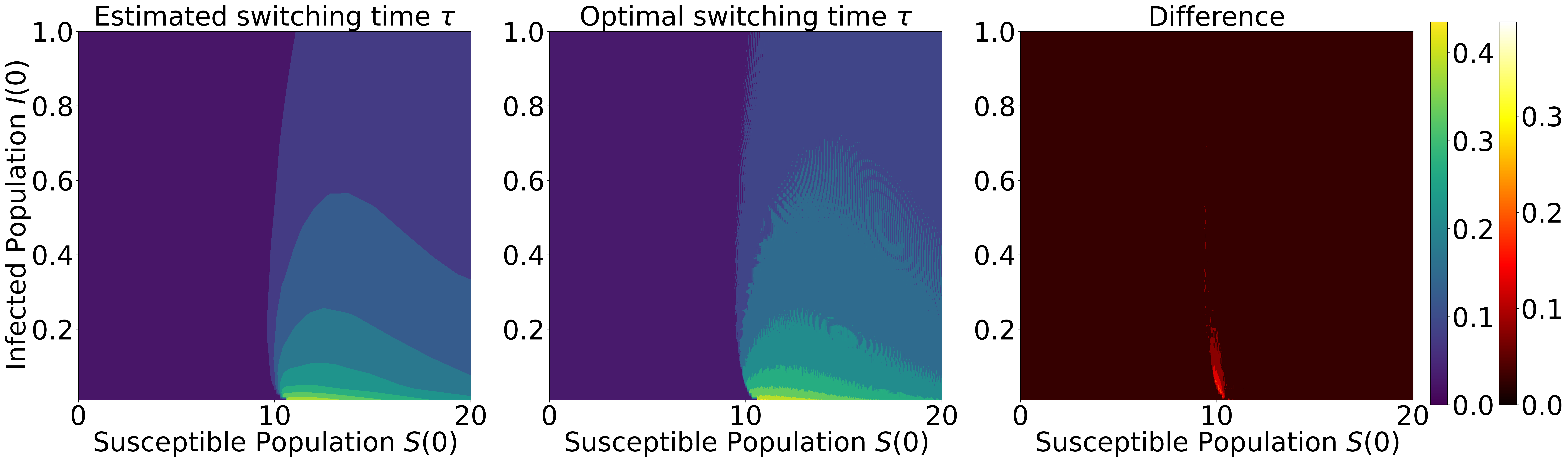}
\caption{Results of trained switching time \( \tau \). The figure presents the estimated switching time \( \tau(\cdot; \theta) \) (left), the ground truth \( \tau \) (middle), and the difference between the ground truth and the estimated values, \( \tau - \tau(\cdot; \theta) \) (right).}
    \label{fig:tau_compare}
\end{figure}

\section{Conclusion}
We proposed a novel approach for approximating the minimum eradication time and synthesizing optimal vaccination strategies for a controlled SIR model via variable-scaling PINNs and the dynamic programming principle. A salient feature of our method is that it does not require domain discretization and offers a computationally efficient solution to HJB equations related to the minimum eradication time. The experimental results confirmed the accuracy and robustness of our method in approximating the eradication time and deriving optimal vaccination strategies. Furthermore, the theoretical justification for our approach is established based on NTK theory. 

Despite these contributions, our study has several limitations. The current framework assumes a time-homogeneous controlled SIR model with constant infection and recovery rates, which might limit its applicability to real-world scenarios with time-inhomogeneous dynamics or heterogeneous populations. Additionally, while our method improves computational efficiency, the training of PINNs remains sensitive to hyperparameter choices and boundary conditions, which require further investigation. Future work will focus on extending this framework to time-inhomogeneous SIR models, for which theoretical support has been proposed in~\cite{jang2023minimum}. Additionally, comparative studies with other data-driven techniques, such as reinforcement learning, will provide a deeper understanding of the advantages and limitations of PINN-based approaches in epidemiological modeling and control.

\section*{Acknowledgments}
This research was supported by Seoul National University of Science and Technology. 

\section*{Conflict of interest}

\bibliographystyle{IEEEtran}

\bibliography{ref}

\end{document}